\def\ext{\mbox{EXT}}
\def\eso{\mbox{ESO}}
\def\rest{\!\restriction\!}

\def\un{{\rm Un}}
\def\ex{{\rm Ex}}
\def\sort{{\rm sort}}
\def\dom{\rm dom}
\def\ran{\rm ran}
\newcommand{\dep}{\mathcal{D}}
\newcommand{\inc}{\mathcal{I}}
\newcommand{\loo}{{L}_{\omega\omega}}
\newcommand{\looo}{{L}_{\omega_1\omega}}

\newcommand{\lio}{{ L}_{\infty\omega}}
\newcommand{\Loo}{{L}_{\omega\omega}}

\def\phi{\varphi}\def\mm{\mathcal{M}}\def\I{\mbox{I}}\def\II{\mbox{II}}
\newtheorem{theorem}{Theorem}

\newtheorem{definition}[theorem]{Definition}
\newtheorem{lemma}[theorem]{Lemma}
\newtheorem{proposition}[theorem]{Proposition}

\newtheorem{example}[theorem]{Example}

\newcommand{\open}{\Bbb}

\newcommand{\oN}{{\open N}}

\def\mm{\mathfrak{M}}

\def\mn{\mathfrak{N}}
\def\ma{\mathfrak{A}}
\def\mb{\mathfrak{B}}

\def\fraisse{Fra\"\i ss\`e}
\def\ef{Ehrenfeucht-\fraisse}

\def\P{{\cal P}}

\def\D{{\Delta}}

\def\={=\!\!}

\def\qed{$\Box$\medskip}

\def\rob{\mbox{\rm Rob}}
\def\craig{\mbox{\rm Craig}}
\def\Beth{\mbox{\rm Beth}}
\def\sk{\mbox{\rm SK}}
\def\wbeth{\mbox{\rm WBeth}}
\def\sep{\mbox{\rm Sep}}
\newcommand{\pc}[1]{\mathop{\mbox{PC}}(#1)}

\newcommand{\ec}[1]{\mathop{\mbox{EC}}(#1)}

\def\lio{L_{\infty\omega}}

\def\lkp{L_{\kappa}^1}

\def\laa{L(aa)}

\def\sol{L^2}

\def\lcof{L(Q^{\mbox{\tiny cof}}_{\omega})}
\def\lcofc{L(Q^{\mbox{\tiny cof}}_{\le 2^\omega})}

\documentclass{article}
\usepackage{amsfonts,psfrag,hyperref}
\usepackage[pdftex]{graphicx}
\usepackage{latexsym} 
\usepackage{amssymb,amsmath}

\title{Interpolation in   model theory\footnote{This project has received funding from the European Research Council (ERC) under the European Union’s Horizon 2020 research and innovation programme (grant agreement No 101020762).}}

\author{Jouko V\"a\"an\"anen\\
University of Helsinki}

\begin{document}

\maketitle

\begin{abstract}
We bring an abstract model theory perspective to interpolation. We ask,  what is the role of interpolation in the study of extensions of first order logic, such as infinitary logics, generalized quantifiers and 
higher order logics? The abstract model theory approach reveals the basic connections between various interpolation properties in isolation, on their own, as well as  with respect to other model theoretic properties, such as compactness.
\end{abstract}

\tableofcontents

\section{Introduction}

The Craig Interpolation Theorem (\cite{MR104564}) was originally proved by proof theoretic methods. Even today proof theory or methods such as tableaux methods derived from proof theory are the most popular road to interpolation. The model theoretic version of interpolation is the Robinson property (\cite{MR78307}), which can be readily proved with purely model theoretic methods. The statement of  the Robinson property resembles amalgamation which is at the heart of modern model theory. Neither interpolation nor the Robinson property seem to be central in current model theory but the idea of amalgamation is certainly central (see e.g. \cite{MR1221741}). Moreover, the model theoretic proof of the Robinson property is in principle based on the use of one form or another of saturated models and saturated models are certainly important in model theory.

In this chapter we take the axiomatic (or abstract) approach to model theory. We investigate which model theoretic properties of first order logic, which we denote in this chapter by $\loo$, are behind interpolation and the Robinson property. For this to make sense we recall the concept of an abstract logic as well as a variety of extensions of first order logic, some with interpolation and some without. This indicates what  it is in first order (or some other) logic that allows us to derive the interpolation theorem.

The topic of interpolation and its weaker forms in model theoretic languages is a vast topic and we will only touch upon some basic results in this chapter. The topic is so extensive because there are numerous extensions of first order logic as well as numerous weaker forms of interpolation. This leads to a huge  potential for implications and counter-examples. We have chosen to include in this chapter only what we think as illustrative of the situation. A good reference to a more complete picture is still today \cite{MR819531}.

Section \ref{abstract setting} introduces the basic concepts of abstract model theory and the basic general results about compactness, interpolation, the Robinson property and the Beth Definability Theorem (\cite{MR58537}). We prove some basic relationships between these properties. The focus is on how much we have to assume of the logics for the relationships to hold. Section~\ref{Generalized quantifiers} concentrates on generalized quantifiers, wherein we present the basic facts. The amount of material here is huge but we focus on what we think is illuminating. Section~\ref{Infinitary logics} presents the basic facts about interpolation in infinitary logics ending with Shelah's relatively new  logic $L^1_\kappa$, which has an interpolation property (\cite{MR2869022}). Finally in Section~\ref{Higher order logics} we discuss the matter of interpolation in higher order logics, including  logics without negation such as existential second order logic and dependence logic (\cite{MR2351449}).  

We consider both single-sorted and many-sorted logic, as the difference between the two is relevant in the study of interpolation. Everything is many-sorted unless otherwise specified. As is well-known, many-sorted logic can be reduced to single-sorted logic, but this reduction is not so perfect that many-sorted vocabularies would be rendered useless.

The reader is assumed to know basic first order model theory. Otherwise we try to be self-contained. For details on generalized quantifiers, infinitary languages and abstract model theory, a good source is \cite{MR819531}. For unexplained concepts of set theory we refer to \cite{MR1940513}. For basics of model theory we refer to \cite{MR1221741}. 
   
\section{The abstract setting}\label{abstract setting}

By abstract model theory we mean here the study of extensions of first order logic, such as logics with generalized quantifiers, infinitary logics and higher order logics. Interpolation by and large fails in such logics but there are notable exceptions, such as $\looo$ (see Section~\ref{Infinitary logics}), and in some cases interpolation can be established in a relative sense meaning that the interpolant is found, not in the logic under consideration, but in a hopefully not too much bigger logic.

  In its most general form, an {\em abstract logic}, semantically conceived,  is just a triple $L=(S,F,\models)$
where $S$ and $F$ are classes and $\models$ is a subclass of $S\times F$. Elements of  $S$
are called the \emph{structures} of $L$, elements of $F$ are called the
\emph{sentences} of $L$, and the relation $\models$ is called the
satisfaction relation of $L$. Note that we do not assume anything about the ``structures'' that constitute $S$.  They need not be structures in the sense of ordinary first order model theory. They can also be Kripke models or valuations of propositional logic or whatnot. In this generality the concept of an abstract logic covers first order logic and its extensions by generalized quantifiers and infinitary operations, but also modal logic, propositional logic, topological logic \cite{MR560706}, etc. Likewise, the elements of $F$, i.e. the ``sentences'' of $L$, need not be sentences in any usual sense. Normally sentences are identified with finite strings of symbols but sentences of infinitary logics, such as $\looo$ are best thought of as sets (or perhaps trees). Finally, the satisfaction relation $\models$ does not have to have any inductive definition tying together $S$ and $F$. All we assume is that it is a subclass of $S\times F$, which means that it is a class in the sense of set theory\footnote{Nothing essential would change if we worked in class theory instead of set theory.} i.e. it is a definable predicate of set-theory, perhaps from some parameters.

On this level of generality it is, of course, hardly possible to prove any deep results about abstract logics. But if some simple results could be proved, they would automatically apply to first order logic, propositional logic, modal logic, and so on, and would manifest similarity or a ``family resemblance'' of the logics from this point of view, while the logics mentioned live otherwise lightyears from each other. Indeed, as we shall see, some very basic facts about interpolation and model theoretic properties around it can be proved even in our very general setup. Whether the  results that can be proved in this generality are interesting or not, is a reasonable question. Be that as it may, it is however notable that we can perfectly formulate many fundamental model theoretic  \emph{concepts}  on this level of generality, even if no deep results can be proved. 

The basic concepts of abstract logics are the following: A structure $\mm\in S$ is a \emph{model of $\phi\in F$} if $\mm\models\phi$, and a \emph{model of} $\Sigma\subseteq F$, in symbols $\mm\models\Sigma$, if $\mm$ is a model of each $\phi\in \Sigma$. The sentence $\phi\in F$, or a set $\Sigma\subseteq F$, \emph{is consistent} if it has a model.  Whenever $\phi,\psi\in F$, we write $\phi\models\psi$, if for every model $\mm\in S$, $\mm\models\phi$ implies $\mm\models\psi$, and $\phi\equiv\psi$ if $\phi\models\psi$ and $\psi\models\phi$. Whenever $\Sigma\cup\{\psi\}\subseteq F$, we write $\Sigma\models\psi$, if $\mm\models\Sigma$ implies $\mm\models\psi$ for all $\mm\in S$. Two models $\mm,\mm'\in S$ are \emph{$L$-equivalent}, in symbols $\mm\equiv_L\mm'$,  if they are models of the same sentences of $F$. A set $\Sigma\subseteq F$ is called \emph{complete} if for some  $\mm\in S$ we have $\Sigma=\{\phi\in F:\mm\models\phi\}$.  Any consistent set $\Sigma\subseteq F$ can be extended to a complete consistent $\Sigma'$ by taking a model $\mm$ of $\Sigma$ and letting $\Sigma'=\{\phi\in F:\mm\models\phi\}$.

Already this very general approach imposes an equivalence relation $\equiv_L$ on $S$. Different abstract logics may impose different equivalence relations on the same class $S$. Intuitively speaking a stronger logic imposes a finer equivalence relation, if $S$ is fixed. Respectively, the structures in $S$ impose relationships between sentences in $F$, e.g. the relation $\phi\models\psi$.

We can identify a sentence with the class of its models. Likewise, we can identify a structure with the class of sentences it is a model of. Such identifications would lead to further abstraction without necessarily being helpful. But this demonstrates a certain duality between structures and sentences, well-known and much studied in the development of mathematical logic. Of course, these basic relationships between structures and sentences depend heavily on what is our (abstract) logic $L$.

\begin{definition}
We say that an abstract logic $L$ satisfies the \emph{Compactness Theorem} or is \emph{compact}, if  for every set $\Sigma\subseteq F$ the following holds: If  every finite $\Gamma\subseteq\Sigma$ is consistent, then $\Sigma$ itself is consistent. If this holds for all countable $\Sigma$, we say that $L$ satisfies the \emph{Countable Compactness Theorem} or is \emph{countably compact}.
\end{definition}

To formulate interpolation  we introduce the \emph{vocabulary} (class) function  $\tau$ which has $S\cup F$ as its domain.  The values of $\tau$ are arbitrary sets but intuitively $\tau(\phi)$ is  the set of relation, constant, proposition, function, etc -symbols occurring in the sentence $\phi$. Intuitively, $\tau(\mm)$ is  the set of relation, constant, proposition, function, etc -symbols that are interpreted in $\mm$.  We assume always that $\mm\models\phi$ implies $\tau(\phi)\subseteq\tau(\mm)$. If $\Sigma\subseteq F$, we write $\tau(\Sigma)$ for $\bigcup_{\phi\in \Sigma}\tau(\phi)$.  We could go ahead and formulate natural axioms that $\tau$ should satisfy in order to correspond to our intuition about the vocabulary of a sentence or of a structure. However, we skip that here, as it is not relevant for us now. We just call the quadruple $(S,F,\models,\tau)$ an abstract logic. For such a quadruple we can now define the concept of interpolation:

\begin{definition} The abstract logic  $L=(S,F,\models,\tau)$ has the \emph{Interpolation property} if for every $\phi,\psi\in F$, the relation $\phi\models\psi$ implies the existence of $\theta\in F$ such that
\begin{enumerate}
\item $\phi\models\theta$
\item $\theta\models\psi$
\item $\tau(\theta)\subseteq \tau(\phi)\cap \tau(\psi)$.
\end{enumerate}
\end{definition}

In the sense of this definition, first order logic, the infinitary logic $L_{\omega_1\omega}$ (\cite{MR344115}, see section \ref{Infinitary logics} below), Shelah's $L^1_\kappa$ (\cite{MR2869022}, see section \ref{Infinitary logics} below),  single-sorted second order logic (see section~\ref{Higher order logics}), propositional logic, modal logic and topological logic (\cite{MR560706}) all have the interpolation property. Still, considering how many extensions of first order logic have been introduced, see  e.g. \cite{MR819531}, it is a little surprising how few of them have the interpolation property. This circumstance has led to a study of weaker forms of interpolation as well as of the possible reasons why many logics fail to satisfy interpolation.

For the history of the birth of the Interpolation property of first order logic we refer to \cite{MR2438873}.

Assuming the Compactness Theorem, there is an alternative formulation of interpolation:

\begin{definition} The abstract logic  $L=(S,F,\models,\tau)$ has the \emph{Robinson (Consistency) property} (\cite{MR78307}) if for every complete $\Sigma_0\subseteq F$, every  consistent $\Sigma_1\subseteq F$ extending $\Sigma_0$, and every  consistent $\Sigma_2\subseteq F$ extending $\Sigma_0$, such that $\tau(\Sigma_1)\cap\tau(\Sigma_2)=\tau(\Sigma_0)$,  the set $\Sigma_1\cup\Sigma_2$ is consistent.
\end{definition}

The model theoretic proof of the Robinson  property for first order logic is based on the following simple idea: Take an infinite saturated model $\mm_1$ of $\Sigma_1$ and a saturated model $\mm_2$ of $\Sigma_2$ such that $\mm_1$ and $\mm_2$ have the same cardinality. Now $\mm_1\rest\tau(\Sigma_0)$ and $\mm_2\rest\tau(\Sigma_0)$ are saturated elementary equivalent (since $\Sigma_0$ is complete) models of the same cardinality. Hence they are isomorphic. The isomorphism can be used to transfer the interpretations of symbols in $\tau(\Sigma_2)\setminus\tau(\Sigma_0)$ from $\mm_2$ to $\mm_1$. This yields an expansion $\mn$ of $\mm_1$ such that $\mn\rest\tau(\Sigma_1)=\mm_1$ and $\mn\rest\tau(\Sigma_2)\cong\mm_2$. Thus $\mn$ is a model of $\Sigma_1\cup\Sigma_2$. It should be noted that this proof is an overkill but worth knowing because it is probably the quickest proof. Indeed, saturated models are only known to exist if we assume some amount of the Generalized Continuum Hypothesis or alternatively the existence of strongly inaccessible cardinals, neither of which is really needed for the result.  The use of saturated models can be avoided by  more refined methods, for example special models (\cite{MR1059055}) or recursively saturated models (\cite{MR403952}), which always exist. Alternatively, we present below in Theorem~\ref{lindstr} a gentler proof due to Lindstr\"om. We will also prove below in Theorem~\ref{5} the Robinson property from compactness and interpolation in a very general setting.

To connect the Interpolation property and the Robinson  property we need to assume a little bit about abstract logics. To this end we define:

\begin{definition}\begin{enumerate}
\item The abstract logic $L=(S,F,\models,\tau)$ is \emph{closed under conjunction} if for every $\phi,\psi\in F$ there
 is $\theta\in F$, denoted $\phi\wedge\psi$,  such that $\tau(\theta)=\tau(\phi)\cup\tau(\psi)$ and for all $\mm\in S$:
$$\mm\models\phi\wedge\psi\iff\mm\models\phi\mbox{ and }\mm\models\psi.$$
\item $L$ is \emph{closed under negation} if for every $\phi\in F$ there is $\theta\in F$, denoted $\neg\phi$, such that $\tau(\theta)=\tau(\phi)$ and for all $\mm\in S$:
$$\mm\models\neg\phi\iff\mm\not\models\phi.$$ 
\item If $L$ is closed under both conjunction and negation, we denote $\neg(\neg\phi\wedge\neg\psi)$ by $\phi\vee\psi$ and $(\phi\wedge\psi)\vee(\neg\phi\wedge\neg\psi)$ by $\phi\leftrightarrow\psi$.
\end{enumerate}
\end{definition}

\begin{theorem}\label{5}
If $L$ is closed under negation and conjunction, and satisfies the Compactness Theorem, then the following conditions are equivalent:
\begin{enumerate}
\item $L$ has the Interpolation property.
\item $L$ has the Robinson  property.
\end{enumerate}
\end{theorem}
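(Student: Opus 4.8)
The plan is to prove the two implications separately. In both directions the Compactness Theorem is used to pass from sets of sentences to single sentences, while closure under conjunction and negation supplies the sentences $\bigwedge\Gamma$ and $\neg\theta$ that the argument needs.

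For (1)$\Rightarrow$(2) I would argue by contradiction: suppose $L$ has the Interpolation property, $\Sigma_0$ is complete, $\Sigma_1,\Sigma_2$ are consistent extensions of $\Sigma_0$ with $\tau(\Sigma_1)\cap\tau(\Sigma_2)=\tau(\Sigma_0)$, but $\Sigma_1\cup\Sigma_2$ is inconsistent. First apply Compactness to get finite $\Gamma_1\subseteq\Sigma_1$, $\Gamma_2\subseteq\Sigma_2$ with $\Gamma_1\cup\Gamma_2$ inconsistent, and put $\phi=\bigwedge\Gamma_1$, $\psi=\bigwedge\Gamma_2$, so that $\phi\models\neg\psi$. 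Interpolation then yields $\theta$ with $\phi\models\theta$, $\theta\models\neg\psi$, and $\tau(\theta)\subseteq\tau(\phi)\cap\tau(\psi)\subseteq\tau(\Sigma_1)\cap\tau(\Sigma_2)=\tau(\Sigma_0)$. The decisive point is that, $\Sigma_0$ being complete, it is the full theory of some structure $\mn$, so by closure under negation exactly one of $\theta,\neg\theta$ holds in $\mn$, hence lies in $\Sigma_0$. If $\theta\in\Sigma_0\subseteq\Sigma_2$, then every model of $\Sigma_2$ satisfies $\theta$, while $\psi\models\neg\theta$ (since $\theta\models\neg\psi$) together with $\Sigma_2\models\psi$ forces it to satisfy $\neg\theta$ as well, contradicting the consistency of $\Sigma_2$; if $\neg\theta\in\Sigma_0\subseteq\Sigma_1$ the symmetric reasoning contradicts the consistency of $\Sigma_1$. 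Hence $\Sigma_1\cup\Sigma_2$ is consistent.

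For (2)$\Rightarrow$(1) I would follow the classical model-theoretic route from Robinson consistency to interpolation. Given $\phi\models\psi$, set $\tau_0=\tau(\phi)\cap\tau(\psi)$ and let $\Theta=\{\theta\in F:\tau(\theta)\subseteq\tau_0,\ \phi\models\theta\}$ be the set of $\tau_0$-consequences of $\phi$. By Compactness and closure under conjunction it suffices to prove $\Theta\models\psi$, since then a suitable finite conjunction from $\Theta$ is an interpolant. So assume $\Theta\not\models\psi$ and fix $\mb$ with $\mb\models\Theta\cup\{\neg\psi\}$. Let $\Sigma_0$ be the complete theory of the reduct $\mb\rest\tau_0$. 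A brief compactness argument shows $\{\phi\}\cup\Sigma_0$ is consistent: otherwise $\phi$ entails the negation of some conjunction $\chi$ of members of $\Sigma_0$, but then $\neg\chi\in\Theta$, so $\mb\models\neg\chi$, contradicting $\mb\models\chi$. Choosing $\ma\models\{\phi\}\cup\Sigma_0$, I would apply the Robinson property to $\Sigma_0$, the complete $\tau(\phi)$-theory $\Sigma_1$ of $\ma$, and the complete $\tau(\psi)$-theory $\Sigma_2$ of $\mb$: both $\Sigma_1,\Sigma_2$ are consistent and extend the complete $\Sigma_0$, and their vocabularies meet exactly in $\tau_0=\tau(\Sigma_0)$. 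Robinson then produces a model of $\Sigma_1\cup\Sigma_2$, which contains $\phi$ and $\neg\psi$, contradicting $\phi\models\psi$. Hence $\Theta\models\psi$.

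I expect the real work to lie in the second direction, specifically in the vocabulary bookkeeping: the Robinson property is stated with the exact equality $\tau(\Sigma_1)\cap\tau(\Sigma_2)=\tau(\Sigma_0)$, so one cannot feed it arbitrary models of $\phi$ and of $\neg\psi$ but must pass to reducts and restricted theories and check that these are complete in the sense of the definition; arranging that $\tau(\Sigma_0)$ equals $\tau_0$ on the nose may require mild assumptions on the vocabulary function $\tau$. One should also dispatch the degenerate cases — $\phi$ inconsistent, or $\tau_0$ with no symbols — where the existence of an interpolant over $\tau_0$ needs a separate remark. The first direction, by contrast, is a straightforward unwinding of the definitions once the use of the completeness of $\Sigma_0$ is isolated.
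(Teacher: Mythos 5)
Your proposal is correct and follows essentially the same route as the paper, which proves the relativized generalization $\craig(L,L')\Leftrightarrow\rob(L,L')$ by exactly this argument: compactness plus conjunction to finitize and then completeness of $\Sigma_0$ to locate both $\theta$ and $\neg\theta$ in it for one direction, and for the other the set of $\tau_0$-consequences of $\phi$, a countermodel's complete $\tau_0$-theory $\Sigma_0^*$, the same compactness argument that $\Sigma_0^*\cup\{\phi\}$ is consistent, and a final application of Robinson. The bookkeeping caveats you flag (completeness of the restricted theory, empty $\Gamma_i$, degenerate vocabularies) are real but are glossed over in the paper as well.
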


Rather than proving this, we formulate and prove a stronger version.
Since in abstract model theory there are numerous examples of the failure of the Interpolation property as well as of the Robinson property, it makes sense to introduce relative versions of both. To this end we need the concept of a \emph{sublogic}:

\begin{definition}\begin{enumerate}
\item A logic $L=(S,F,\models,\tau)$ is a sublogic of $L'=(S',F',\models',\tau')$, in symbols $L\le L'$,  if $S=S'$ and for every $\phi\in F$ there is $\phi'\in F'$ such that $\phi\equiv\phi'$ i.e. for all $\mm\in S$, $\mm\models\phi$ if and only if $\mm\models'\psi$, and in addition, $\tau(\phi)=\tau'(\phi')$. We usually identify $\phi$ and $\phi'$ and thereby $F\subseteq F'$. 
\item Logics $L$ and $L'$ are \emph{equivalent}, $L\equiv L'$, if $L\le L'$ and $L'\le L$.
\end{enumerate}

\end{definition}
Note that here both logics in the above definition have the same class $S$ of structures. The more general case of different classes of structures can be dealt with by means of the so-called Chu-transform, see \cite{MR4357456}. Note that a sublogic of a (countably) compact logic is (countably) compact.

\begin{definition}  Suppose $L=(S,F,\models,\tau)$ and $L'=(S',F',\models',\tau')$ are abstract logics such that $L\le L'$. 
\begin{enumerate}
\item $\craig(L,L')$ holds if for every $\phi,\psi\in F$, the relation $\phi\models\psi$ implies the existence of $\theta\in F'$ such that
for all $\mm\in S$, $\mm\models\phi\Rightarrow\mm\models'\theta$, $\mm\models'\theta\Rightarrow\mm\models\psi$, and  $\tau(\theta)\subseteq \tau(\phi)\cap \tau(\psi)$. By $\craig(L)$ we mean $\craig(L,L)$.

\item $\rob(L,L')$ holds if for every complete $\Sigma_0\subseteq F'$, every   $\Sigma_1\subseteq F$, such that $\Sigma_0\cup \Sigma_1$ is consistent, and every    $\Sigma_2\subseteq F$, such that  $\Sigma_0\cup\Sigma_2$ is consistent, if $\tau(\Sigma_0\cup\Sigma_1)\cap\tau(\Sigma_0\cup\Sigma_2)=\tau(\Sigma_0)$,  then the set $(\Sigma_0\cap F)\cup\Sigma_1\cup\Sigma_2$ is consistent. By $\rob(L)$ we mean $\rob(L,L)$.

\end{enumerate}\end{definition}

\begin{theorem}
If $L\le L'$ are closed under negation and conjunction, and $L'$ satisfies the Compactness Theorem, then the following are equivalent:
\begin{enumerate}
\item \craig($L,L'$).
\item \rob($L,L'$).
\end{enumerate}
\end{theorem}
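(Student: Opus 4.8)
The plan is to prove the two implications separately, in both cases using compactness of $L'$ as the workhorse and the closure of $L$ and $L'$ under negation and conjunction to move between the syntactic and semantic pictures. Throughout, recall that a complete set $\Sigma_0\subseteq F'$ is just the full $L'$-theory of some structure $\mm_0\in S$, so $\tau(\Sigma_0)$ can be taken to be $\tau(\mm_0)$.

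\textbf{From \craig($L,L'$) to \rob($L,L'$).} Suppose $\Sigma_0\subseteq F'$ is complete, $\Sigma_0\cup\Sigma_1$ and $\Sigma_0\cup\Sigma_2$ are consistent (with $\Sigma_1,\Sigma_2\subseteq F$), and $\tau(\Sigma_0\cup\Sigma_1)\cap\tau(\Sigma_0\cup\Sigma_2)=\tau(\Sigma_0)$. Assume for contradiction that $(\Sigma_0\cap F)\cup\Sigma_1\cup\Sigma_2$ is inconsistent. By compactness of $L'$ there are finite $\Delta_0\subseteq\Sigma_0\cap F$, $\Delta_1\subseteq\Sigma_1$, $\Delta_2\subseteq\Sigma_2$ with $\Delta_0\cup\Delta_1\cup\Delta_2$ inconsistent. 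Let $\phi=\bigwedge\Delta_0\wedge\bigwedge\Delta_1\in F$ and $\psi=\neg\bigwedge\Delta_2\in F$; then $\phi\models\psi$, so \craig($L,L'$) gives $\theta\in F'$ with $\tau(\theta)\subseteq\tau(\phi)\cap\tau(\psi)$, $\phi\models\theta$ (as models, i.e. $\mm\models\phi\Rightarrow\mm\models'\theta$), and $\theta\models\psi$ (i.e. $\mm\models'\theta\Rightarrow\mm\models\psi$). The point is that $\tau(\phi)\subseteq\tau(\Sigma_0\cup\Sigma_1)$ and $\tau(\psi)\subseteq\tau(\Sigma_0\cup\Sigma_2)$, so $\tau(\theta)\subseteq\tau(\Sigma_0)$; since $\Sigma_0$ is the complete $L'$-theory of some $\mm_0$, either $\theta\in$ that theory (up to equivalence) or $\neg\theta$ is. If $\mm_0\models'\theta$ then, replacing $\Sigma_0$-witnesses appropriately, every model of $\Sigma_0\cup\Sigma_2$ would have to satisfy $\bigwedge\Delta_2$ via $\theta\models\psi$ — actually the clean way: a model $\mm$ of $\Sigma_0\cup\Sigma_2$ satisfies $\theta$ or $\neg\theta$ consistently with $\Sigma_0$; trace through both cases to contradict either the consistency of $\Sigma_0\cup\Sigma_1$ (take a model, it satisfies $\phi$-part hence $\theta$, but $\theta\models\neg\bigwedge\Delta_2$...) no — more carefully: a model of $\Sigma_0\cup\Sigma_1$ satisfies $\Delta_0\cup\Delta_1$ hence $\phi$ hence $\theta$; a model of $\Sigma_0\cup\Sigma_2$ satisfies $\Delta_2$ hence $\neg\psi$ hence $\neg\theta$. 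Since $\tau(\theta)\subseteq\tau(\Sigma_0)$ and $\Sigma_0$ is complete, $\theta$ and $\neg\theta$ cannot both be consistent with $\Sigma_0$ — contradiction. This is the main subtlety: one must check that ``$\tau(\theta)\subseteq\tau(\Sigma_0)$ and $\Sigma_0$ complete'' forces $\Sigma_0$ to decide $\theta$, which is exactly where completeness of $\Sigma_0$ (as a theory of a structure) is used.

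\textbf{From \rob($L,L'$) to \craig($L,L'$).} Given $\phi,\psi\in F$ with $\phi\models\psi$, suppose no interpolant $\theta\in F'$ with $\tau(\theta)\subseteq\tau(\phi)\cap\tau(\psi)=:\rho$ exists. Consider the $L'$-type $T=\{\theta\in F':\tau(\theta)\subseteq\rho,\ \phi\models\theta\}$ over vocabulary $\rho$. If $T\cup\{\neg\psi\}$ were inconsistent, compactness of $L'$ would produce a finite conjunction $\theta_0$ of members of $T$ with $\theta_0\models\psi$, and $\theta_0$ would be an interpolant — contradiction. So $T\cup\{\psi^c\}$, where $\psi^c$ means $\neg\psi$ ``on the $\tau(\psi)$ side'', is consistent; likewise $\{\phi\}$ is consistent (else $\phi\models\psi$ vacuously and $\theta=\phi\wedge\neg\phi$-type thing — handle the degenerate cases separately, using that a contradiction of vocabulary $\subseteq\rho$ exists). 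Now set $\Sigma_0$ to be a complete extension in $F'$ over vocabulary $\rho$ of $T$; the standard move is to take a model of $T\cup\{\neg\psi\}$, reduct it to $\rho$, and let $\Sigma_0$ be its full $L'$-theory, then let $\Sigma_1=\{\phi\}$ and $\Sigma_2=\{\neg\psi\}$, arranging (renaming symbols outside $\rho$ if necessary — here is where the $S=S'$ and the vocabulary bookkeeping matter) that $\tau(\Sigma_0\cup\Sigma_1)\cap\tau(\Sigma_0\cup\Sigma_2)=\rho=\tau(\Sigma_0)$ and that $\Sigma_0\cup\Sigma_1$, $\Sigma_0\cup\Sigma_2$ are each consistent. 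Then \rob($L,L'$) yields a model of $(\Sigma_0\cap F)\cup\{\phi\}\cup\{\neg\psi\}$, in particular a model of $\phi\wedge\neg\psi$, contradicting $\phi\models\psi$.

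\textbf{Main obstacle.} The genuine work is not in the compactness arguments, which are routine, but in the vocabulary bookkeeping and in extracting a usable complete $\Sigma_0$ from the interpolation-type $T$: one must ensure $\Sigma_0$ is complete \emph{and} that its vocabulary is exactly $\rho=\tau(\phi)\cap\tau(\psi)$, while keeping the interaction hypothesis $\tau(\Sigma_0\cup\Sigma_1)\cap\tau(\Sigma_0\cup\Sigma_2)=\tau(\Sigma_0)$ intact. In the abstract setting we do not have term models, so $\Sigma_0$ must be taken as the $L'$-theory of a reduct of an actual structure in $S$; verifying that this reduct lies in $S$ and has the right vocabulary is exactly the kind of mild assumption the definition of sublogic ($S=S'$) is designed to give us for free, but it has to be invoked explicitly. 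The degenerate cases ($\phi$ inconsistent, or $\psi$ valid) also need a separate line, since then the interpolant is a trivial sentence of empty vocabulary, which one must assume (or note is available) in $F'$.
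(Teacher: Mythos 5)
Your direction from \craig($L,L'$) to \rob($L,L'$) is essentially the paper's argument: extract a finite inconsistent core by compactness, interpolate between the $\Sigma_1$-side and the $\Sigma_2$-side, note that the vocabulary hypothesis forces $\tau(\theta)\subseteq\tau(\Sigma_0)$, and use completeness of $\Sigma_0$ to decide $\theta$ and thereby contradict the consistency of one of the two sides. That half is correct and matches the paper.

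In the converse direction there is a genuine gap. Having taken a model $\mm$ of $T\cup\{\neg\psi\}$, where $T$ is the set of all $\theta\in F'$ with $\tau(\theta)\subseteq\rho$ and $\phi\models\theta$, and having let $\Sigma_0$ be the complete $L'$-theory of $\mm$ in the vocabulary $\rho$, you still need $\Sigma_0\cup\{\phi\}$ to be consistent before \rob($L,L'$) can be invoked. You only assert this (``arranging \dots that $\Sigma_0\cup\Sigma_1$, $\Sigma_0\cup\Sigma_2$ are each consistent''), and your parenthetical suggests you expect it to follow from renaming symbols outside $\rho$; it does not. In general $\Sigma_0$ properly extends $T$ --- it contains every $\rho$-sentence true in $\mm$, not just the consequences of $\phi$ --- so nothing said so far rules out that $\Sigma_0$ outright refutes $\phi$. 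The missing step is a second compactness argument: if $\Sigma_0\cup\{\phi\}$ were inconsistent, some finite $\{\theta_1,\ldots,\theta_k\}\subseteq\Sigma_0$ would give $\phi\models\neg(\theta_1\wedge\ldots\wedge\theta_k)$; since $\tau(\neg(\theta_1\wedge\ldots\wedge\theta_k))\subseteq\rho$, that sentence belongs to $T$ and hence holds in $\mm$, contradicting $\mm\models\theta_i$ for each $i$. This is precisely the step the paper's proof devotes its effort to (there phrased as the consistency of $\Sigma_0^*\cup\{\psi\}$, with your $T$ playing the role of the paper's $\Sigma_0$ and your $\Sigma_0$ that of $\Sigma_0^*$), and it is the one place where the definition of $T$ as the \emph{full} set of $\rho$-consequences of $\phi$ is actually used. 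Your closing paragraph locates the main difficulty in the vocabulary bookkeeping instead; that is the lesser issue. With this step supplied, your argument closes and coincides with the paper's.
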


\begin{proof} We follow the standard proof, as e.g. in \cite[Chapter II, 7.1.5.]{MR819531}.

(1) implies (2): Suppose a complete $\Sigma_0\subseteq F'$ is given. Suppose also   $\Sigma_1\subseteq F$, such that $\Sigma_0\cup \Sigma_1$ is (wlog) complete, and    $\Sigma_2\subseteq F$, such that  $\Sigma_0\cup\Sigma_2$ is (wlog) complete, are given, and furthermore  $\tau(\Sigma_0\cup\Sigma_1)\cap\tau(\Sigma_0\cup\Sigma_2)=\tau(\Sigma_0)$. Finally, suppose the set $(\Sigma_0\cap F)\cup\Sigma_1\cup\Sigma_2$ is inconsistent.
By the Compactness Theorem of $L'$ there are some $\{\eta_0,\ldots,\eta_k\}\subseteq\Sigma_0\cap F$, $\{\phi_1,\ldots,\phi_n\}\subseteq \Sigma_1$ and $\{\psi_1,\ldots,\psi_m\}\subseteq \Sigma_2$ such that $\{\eta_0,\ldots,\eta_k\}\cup\{\phi_1,\ldots,\phi_n\}\cup\{\psi_1,\ldots,\psi_m\}$ is inconsistent. 
Since $\Sigma_0\cup\Sigma_1$ and $\Sigma_0\cup\Sigma_2$ are individually both consistent, we have $n>0$ and $m>0$. Let $\eta=\eta_1\wedge\ldots\wedge\eta_n$, $\phi=\phi_1\wedge\ldots\wedge\phi_n$ and $\psi=\neg(\psi_1\wedge\ldots\wedge\psi_m)$. Thus $\eta\wedge\phi\models\psi$. Let $\theta\in F$ such that  $\eta\wedge\phi\models\theta$,
 $\theta\models\psi$
and  $\tau(\theta)\subseteq \tau(\eta\wedge\phi)\cap \tau(\psi)$. Since $\Sigma_0\cup\Sigma_1$ is complete, $\theta\in\Sigma_0\cup\Sigma_1$. Since $\Sigma_0\cup\Sigma_2$ is complete and consistent, $\neg\theta\in\Sigma_0\cup\Sigma_2$. Since $\Sigma_0$ is complete, $\{\theta,\neg\theta\}\subseteq\Sigma_0$, contrary to the consistency of $\Sigma_0$. 

(2) implies (1): Suppose  $\phi\models\psi$, where $\phi,\psi\in F$ and $\tau_0=\tau(\phi)\cap \tau(\psi)$. Let $\Sigma_0$ be the set of $\theta\in F'$ such that $\phi\models\theta$ and $\tau(\theta)\subseteq \tau_0$. We show now that $\Sigma_0\models\psi.$ 
Otherwise there is a model $\mm$ of $\Sigma_0\cup\{\neg\psi\}$. 
Let $\Sigma_0^*$ be the set of $\theta\in F'$ such that $\mm\models\theta$ and $\tau(\theta)\subseteq\tau_0$. Note that $\Sigma^*_0$ is complete. The set $\Sigma_0^*\cup\{\psi\}$ is consistent for otherwise the Compactness Theorem gives $\{\theta_1,\ldots,\theta_k\}\subseteq \Sigma_0^*$ such that $\psi\models\neg(\theta_1\wedge\ldots\wedge\theta_k)$ implying  $\neg(\theta_1\wedge\ldots\wedge\theta_k)\in \Sigma_0$, a contradiction. 
Now both $\Sigma^*_0\cup\{\neg\psi\}$ and $\Sigma_0^*\cup\{\psi\}$ are consistent. By (2), 
$(\Sigma^*_0\cap F)\cup\{\phi\}\cup\{\neg\psi\}$ is consistent, a contradiction. Having now proved $\Sigma_0\models\psi$ we use again the Compactness Theorem. We obtain $\{\theta_1,\ldots,\theta_k\}\subseteq \Sigma_0$ such that $\{\theta_1,\ldots,\theta_k\}\models\psi$. Letting $\theta=\theta_1\wedge\ldots\wedge\theta_k$ yields $\theta\in F'$, $\phi\models\theta$, $\theta\models\psi$ and $\tau(\theta)\subseteq\tau(\phi)\cap\tau(\psi)$.
 \end{proof}

In the above theorem it is enough to assume that $L'$ is countably compact, provided that $L'$ satisfies the condition that there are only countably many sentences with a given countable vocabulary and the property \rob($L,L'$) is formulated for countable vocabularies only.

\begin{definition} An abstract logic $(S,F,\models,\tau)$ is \emph{classical}, if:
 \begin{enumerate}

\item $\tau(\phi)$ consists, for all $\phi\in F$, of relation, constant and function symbols.
\item $S$ is a class of structures $\mm$ in the sense of first order logic and $\tau(\mm)$ has its usual meaning as the vocabulary of the structure $\mm$. 
\item If $\mm\in S$ and $a_1,\ldots,a_n\in M$, we assume that the structure $(\mm,a_1,\ldots,a_n)$, obtained from $\mm$ by distinguishing the elements $a_1,\ldots,a_n$ as interpretations of new constant symbols $c_1,\ldots,c_n$ is also in $S$. 
\item If $\sigma$ is a subvocabulary of $\tau(\mm)$, then the \emph{reduct} $\mm\rest\sigma$ is in $S$. 
\item If $\mm\in S$ and $\phi\in F$, then  $\mm\models\phi$ iff $\mm\rest\tau(\phi)\models\phi$. 
\item  $L$ satisfies renaming\footnote{Essentially, renaming says that we can change symbols in a formula and truth is preserved, if we make the respective changes in structures.} in the sense of Definition 1.1.1 of \cite{MR819531}.  
\item A \emph{formula\footnote{We use constant symbols to play the role of variables.}} $\phi(x_1,\ldots,x_n)$ of $L$ is a sentence $\phi(c_1,\ldots,c_n)$, where $c_1,\ldots,c_n$ are new constant symbols.
\end{enumerate}
 A classical logic $L$ is \emph{atomic} if for every relation symbol $R(x_1,\ldots,x_n)$ and constant symbols $c_1,\ldots,c_n$ there is a sentence $\phi(c_1,\ldots,c_n)\in F$, such that $\tau(\phi(c_1,\ldots,c_n))=\{R,c_1,\ldots,c_n\}$ and for all $\mm\in S$ with $\tau(\phi)\subseteq\tau(\mm)$, and all $a_1,\ldots,a_n\in M$.
$$(\mm,a_1,\ldots,a_n)\models\phi(c_1,\ldots,c_n)\iff (a_1,\ldots,a_n)\in R^\mm.$$ We write $\phi(c_1,\ldots,c_n)$ as $R(c_1,\ldots,c_n)$.

\end{definition}

Apart from ordinary first order logic and its extension by generalized quantifiers, infinitary logical operations, and higher order quantifiers,  first order logic with finite models is a classical atomic abstract logic. Also first order logic with ordered models, i.e. models with a distinguished linear order $<$, which is an element of every vocabulary considered, and does not disappear in a reduct, is a classical atomic abstract logic. The two-variable fragment of first order logic, where vocabularies consist only of binary and unary predicates symbols, constant symbols and contain no function symbols, and only two variables are used overall, is a classical atomic abstract logic.

A stronger but still natural and prevalent property of logics is \emph{regularity} in the sense of \cite[Chapter II]{MR819531}.  

For classical logics it makes sense to talk about the cardinality of the models. In particular, we can formulate the important \emph{L\"owenheim property}: Every sentence with a model has a countable model.

An $\ec{L}$-class (``E'' for ``elementary'')  is a subclass $K$ of $S$ such that  for some $\phi\in F$, $\mm\in K$ if and only if $\mm\models\phi$. We then write $\tau(K)=\tau(\phi)$. A $\pc{L}$-class (``P'' for ``projective'')  is a subclass $K$ of $S$ such that $\tau(\mm)$ is 
a fixed $\tau$ for $\mm\in K$, and for some $\phi\in F$, 
$\mm\in K$ if and only if $\mn\models\phi$ for some $\mn$ such that $\mn\rest\tau=\mm$\footnote{For example, the class $K$ of infinite models of the empty vocabulary is a $\pc{\loo}$-class as we can let $\phi$ be the first order sentence of vocabulary $\{<\}$ which says that $<$ is a linear order without last element.}. We then write $\tau(K)=\tau$. 
Here $\tau(\phi)$ may have more sorts than $\tau$\footnote{This will be relevant when we discuss second order logic.}. If no new sort occur in $\tau(\phi)$, then $\pc{\loo}$-definability coincides with existential second order definability. We may consider the family of all $\pc{L}$-classes an abstract logic in the obvious sense.

Craig \cite{MR104565} showed that his interpolation
theorem
  has an equivalent formulation as a separation
property:

\begin{definition}
 The  \emph{separation property} $\sep(L,L')$ holds if any two disjoint $\pc{L}$-classes $K_0$ and $K_1$ with the same vocabulary $\tau$ can be separated by an
$\ec{L'}$-class $K$, i.e. $K_0\subseteq K$ and $K\cap K_1=\emptyset$ with $\tau(K)=\tau$. A logic $L$ satisfies the \emph{separation property} if it satisfies $\sep(L,L)$. Two logics $L$ and $L'$ satisfy the \emph{Souslin-Kleene interpolation 
property}, $\sk(L,L')$,  if every $\pc{L}$-class $K$ whose complement in the class $\{\mm\in S:\tau(\mm)=\tau(K)\}$ is also a $\pc{L}$-class, is  an $\ec{L'}$-class.
\end{definition}

Naturally $\sep(L,L')$ implies $\sk(L,L')$, but not conversely (\cite{MR437336}). Note that $\sep(L,L')$ implies $L\le L'$, if $L$ is closed under negation. The family of 
all $\pc{L}$-classes can be construed as a logic itself, as is carefully explained in  (\cite{MR457146}). This logic, denoted $\Delta(L)$, has similar regularity properties as $L$. For example, it is classical, atomic and closed under conjunction and negation if $L$ is. It is the smallest extension of a regular abstract logic $L$ to a regular abstract logic with the Souslin-Kleene Interpolation property
(\cite{MR457146}). In fact, the Souslin-Kleene Interpolation property
is also called the $\Delta$-interpolation property (\cite{chapter:predicate}). If we limit ourselves to $\pc{L}$-classes that do not add new sorts (i.e. the defining formula does not have new sorts, only new predicates, functions and constants) we obtain a more restrictive extension $\Delta^1_1(L)$ which behaves very much in the same way as $\Delta(L)$. Of course, $\Delta^1_1(L)\subseteq\Delta(L)$.

\begin{example}
The class of models $(M,E)$, where $E\subseteq M\times M$ is an equivalence relation with uncountably many uncountable classes, is definable in $\Delta(L(Q_1))$ but not in $L(Q_1)$ (see Theorem~\ref{17}, especially its proof). Hence $\Delta(L(Q_1))$ is a proper extension of $L(Q_1)$.
\end{example}

We prove the following simple result, generalizing a result from \cite{MR104565}, in some detail because we assume in a sense the minimal amount of the logics, so it is not so clear that the classical proof works. It is perhaps interesting to see where the different assumptions are used.

\begin{proposition}
Assume $L$ and $L'$ are classical and satisfy $S=S'$. Suppose also that  $L$ is closed under negation. 
The following conditions are equivalent:
\begin{enumerate}
\item $\craig(L,L')$.
\item $\sep(L,L')$
\end{enumerate}
\end{proposition}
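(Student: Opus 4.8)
The plan is to run the classical equivalence between Craig interpolation and the separation property, but at the level of abstract classical logics, keeping careful track of where each hypothesis is actually used. The two directions pass back and forth between pairs of disjoint projective classes and implications of the form $\phi\models\psi$. Throughout I would use the classical axioms freely: reducts exist and stay in $S$, satisfaction $\mm\models\chi$ depends only on $\mm\rest\tau(\chi)$, and renaming is available; as a matter of routine one also pads witnessing sentences so that they mention all the symbols under discussion, and tacitly restricts attention to structures interpreting all the vocabularies in play. Note in advance what will \emph{not} be used: no form of compactness, and closure under negation only for $L$.

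\textbf{(1)$\Rightarrow$(2).} Let $K_0,K_1\subseteq S$ be disjoint $\pc L$-classes with common vocabulary $\tau$, witnessed by $\phi_0,\phi_1\in F$. First I would rename so that the symbols of $\tau(\phi_0)$ and of $\tau(\phi_1)$ lying outside $\tau$ become disjoint, so that $\tau(\phi_0)\cap\tau(\phi_1)=\tau$. Disjointness of $K_0$ and $K_1$ means no structure satisfies both $\phi_0$ and $\phi_1$ — a common model $\mm$ would, via the classical axioms, have $\mm\rest\tau$ simultaneously in $K_0$ and in $K_1$ — so $\phi_0\models\neg\phi_1$, where $\neg\phi_1\in F$ because $L$ is closed under negation; this is the first appeal to negation. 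Apply $\craig(L,L')$ to obtain $\theta\in F'$ with $\mm\models\phi_0\Rightarrow\mm\models'\theta$, $\mm\models'\theta\Rightarrow\mm\models\neg\phi_1$, and $\tau(\theta)\subseteq\tau$. Then the $\ec{L'}$-class $K=\{\mm\in S:\mm\models'\theta\}$ separates: if $\mm\in K_0$, any expansion $\mn\models\phi_0$ with $\mn\rest\tau=\mm$ has $\mn\models'\theta$, hence $\mm\models'\theta$ (satisfaction through the reduct, as $\tau(\theta)\subseteq\tau$), so $\mm\in K$; and if $\mm$ were in $K\cap K_1$, an expansion $\mn\models\phi_1$ with $\mn\rest\tau=\mm$ would again have $\mn\models'\theta$, hence $\mn\models\neg\phi_1$, contradicting $\mn\models\phi_1$. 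A final trivial adjustment of $\theta$ makes $\tau(K)$ equal to $\tau$ rather than a subset of it.

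\textbf{(2)$\Rightarrow$(1).} Given $\phi\models\psi$ in $F$, set $\tau_0=\tau(\phi)\cap\tau(\psi)$ and let $K_0$ be the class of $\tau_0$-reducts of models of $\phi$ and $K_1$ the class of $\tau_0$-reducts of models of $\neg\psi$; the latter uses $\neg\psi\in F$, the second appeal to negation. Both are $\pc L$-classes over $\tau_0$. The crux is $K_0\cap K_1=\emptyset$: if some $\mm_0$ lay in both, fix (after passing to reducts) $\mm^a$ of vocabulary $\tau(\phi)$ with $\mm^a\rest\tau_0=\mm_0$ and $\mm^a\models\phi$, and $\mm^b$ of vocabulary $\tau(\psi)$ with $\mm^b\rest\tau_0=\mm_0$ and $\mm^b\models\neg\psi$; since $\mm^a$ and $\mm^b$ share the reduct $\mm_0$ they fuse into a single structure $\mm^c$ of vocabulary $\tau(\phi)\cup\tau(\psi)$, interpreting $\tau(\phi)$ as in $\mm^a$ and $\tau(\psi)$ as in $\mm^b$, consistently on the overlap $\tau_0$; then $\mm^c\models\phi$ and $\mm^c\models\neg\psi$, contradicting $\phi\models\psi$. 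Now $\sep(L,L')$ yields $\theta\in F'$ with $\tau(\theta)=\tau_0$, $K_0\subseteq\{\mm:\mm\models'\theta\}$ and $\{\mm:\mm\models'\theta\}\cap K_1=\emptyset$. This $\theta$ is the interpolant: $\mm\models\phi$ forces $\mm\rest\tau_0\in K_0$ and hence $\mm\models'\theta$; conversely, if $\mm\models'\theta$ then $\mm\rest\tau(\psi)\models\psi$, since otherwise $\mm\rest\tau(\psi)\models\neg\psi$ and $\mm\rest\tau_0$ would lie in both $K_1$ and $\{\mm:\mm\models'\theta\}$; and $\tau(\theta)=\tau_0\subseteq\tau(\phi)\cap\tau(\psi)$.

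\textbf{Where the difficulty sits.} The one genuinely load-bearing step is the fusion $\mm^c$ in (2)$\Rightarrow$(1): this is exactly where one needs $S$ closed under amalgamating two structures over a shared reduct — immediate in the single-sorted case, but requiring a little care many-sortedly, where $\tau(\phi)$ and $\tau(\psi)$ may carry different sorts with only the $\tau_0$-sorts in common — and this is what ``classical'' must deliver. Everything else is vocabulary bookkeeping: renaming and padding to force $\tau(\phi_0)\cap\tau(\phi_1)=\tau$, pinning $\tau(K)$ down to exactly $\tau$ (resp.\ $\tau_0$), and the harmless convention that $\mm\not\models\chi$ whenever $\mm$ fails to interpret all of $\tau(\chi)$, so that the universal quantifiers over structures in $\craig$ and in ``$\phi\models\psi$'' are read relative to structures rich enough to interpret the vocabularies involved. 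The proof thus isolates the two features actually at work: closure of the class of structures under reducts and amalgams, and the availability of $\neg$ on the $L$-side alone.
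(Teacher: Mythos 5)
Your proof is correct and takes essentially the same route as the paper's: renaming plus $L$-negation to turn disjoint $\pc{L}$-classes into an implication $\phi_0\models\neg\phi_1$ in one direction, and the reduct classes of models of $\phi$ and of $\neg\psi$ in the other. The only difference is that you spell out the amalgamation/fusion argument behind $K_0\cap K_1=\emptyset$, which the paper simply asserts; that is a welcome elaboration of the same argument, not a different approach.
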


\begin{proof}
Assume (1). Suppose $K_0$ and $K_1$ are disjoint $\pc{L}$-classes and $\tau=\tau(K_0)=\tau(K_1)$. Let $\phi_0\in F$ with 
$\mm\in K_0$ if and only if $\mn\models\phi_0$ for some $\mn$ such that $\mn\rest\tau=\mm$ and $\phi_1\in F$ with 
$\mm\in K_1$ if and only if $\mn\models\phi_1$ for some $\mn$ such that $\mn\rest\tau=\mm$. Using the renaming property of $L$ we can change the non-logical symbols of $\phi_1$, other than those in $\tau$, to something completely new, and thereby make sure $\tau=\tau(\phi_0)\cap\tau(\phi_1)$. Using closure of $L$ under negation, we can find $\neg\phi_1\in F$. Since $K_0$ and $K_1$ are disjoint, $\phi_0\models\neg\phi_1$. By (1), there is $\theta\in F'$ such that $\phi_0\models\theta$, $\theta\models\phi_1$ and $\tau(\theta)\subseteq\tau$. Now the $\ec{L'}$-class $K=\{\mm\in S:\mm\models\theta\}$ separates $K_0$ and $K_1$ i.e. $K_0\subseteq K$ and $K\cap K_1=\emptyset$.

Assume  (2). Suppose $\phi\models\psi$, where $\phi,\psi\in F$. Let $\tau=\tau(\phi)\cap\tau(\psi)$. Since $L$ is closed under negation, $\neg\psi\in F$. Let $K_0=\{\mm\rest\tau:\mm\in S, \mm\models\phi\}$ and $K_1=\{\mm\rest\tau:\mm\in S, \mm\models\neg\psi\}$. Now $K_0\cap K_1=\emptyset$. By (2) there is an $\ec{L'}$-class $K$ such that $K_0\subseteq K$, 
$K\cap K_1=\emptyset$ and $\tau(K)=\tau$. Let $\theta\in F'$ such that $\tau(\theta)=\tau$ and $K=\{\mm\in S:\tau(\mm)=\tau\mbox{ and }\mm\models\theta\}$. Now $\phi\models\theta$, $\theta\models\psi$, and
$\tau(\theta)\subseteq\tau(\phi)\cap\tau(\psi)$. 
\end{proof}

The assumption about being closed under negation is essential as the following example from \cite{MR2134728} shows: Let $L$ be first order logic, with negation only in front of atomic formulas, added with the generalized quantifier ``there are infinitely many $x$ such that ...'', i.e. the quantifier $Q_0$.  Every $\pc{L}$-class is $\pc{\loo}$, as can be proved by induction on $\pc{L}$-definitions, and therefore separation holds for $L$, even $\sep(L,\loo)$. On the other hand, the proof of Theorem~\ref{17}  shows that $\craig(L)$ fails.

A famous consequence of the Craig Interpolation Theorem is the Beth Definability Theorem  (\cite{MR58537}), which can be formulated as follows in the abstract logic setting:

\begin{definition}Suppose $L$ and $L'$ are classical, atomic and closed under conjunction, and negation.
\begin{enumerate}
\item  We say that $\Beth(L,L')$ holds if  every $\phi\in F$ and every predicate symbol $P\in\tau(\phi)$ of arity $n$ satisfy: If $\phi'$ denotes $\phi$ with $P$ renamed as $P'\notin\tau(\phi)$  and $\phi\wedge\phi'\models P(c_1,\ldots,c_n)\leftrightarrow P'(c_1,\ldots,c_n)$, where $c_1,\ldots,c_n\notin\tau(\phi)$, then
 there is $\theta\in F'$ such that  $P\notin\tau(\theta)$ and $\phi\models P(c_1,\ldots,c_n)\leftrightarrow\theta$. A logic $L$ has the \emph{Beth property}, $\Beth(L)$, if $\Beth(L,L)$ holds. 
 \item We say that $\wbeth(L,L')$ holds if the above condition for $\Beth(L,L')$ holds with the additional assumption that every $\mm\in S$ with $\tau(\mm)=\tau(\phi)\setminus\{P\}$ can be expanded to a model $\mn\in S$ such that $\mn\models\phi$.
A logic $L$ has the \emph{weak Beth property} (\cite{MR317923}), $\wbeth(L)$, if $\wbeth(L,L)$ holds.
\end{enumerate}\end{definition}

Clearly, $\Beth(L,L')$ implies $\wbeth(L,L')$. Note also, that $\wbeth(L,L')$ implies $L\le L'$.

It is interesting to note that  the fact that first order logic has the Beth property, was proved by Beth well before the Craig Interpolation Theorem was published. The Beth property is sometimes formulated with a theory in place of our single sentence $\phi$.  The two formulations are equivalent for $\loo$, thanks to the Compactness Property.

The following establishes an easy connection between interpolation and the Beth property:

\begin{proposition}\label{cib}
Suppose $L$ and $L'$ are classical abstract logics, closed under conjunction and negation, and $L\le L'$. Then $\craig(L,L')$ implies $\Beth(L,L')$.
\end{proposition}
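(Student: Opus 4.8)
The plan is to derive $\Beth(L,L')$ from $\craig(L,L')$ by the classical argument, adapted to the abstract setting. Suppose $\phi\in F$, $P\in\tau(\phi)$ is $n$-ary, and (writing $\phi'$ for $\phi$ with $P$ renamed as a fresh $P'$) we have $\phi\wedge\phi'\models P(c_1,\ldots,c_n)\leftrightarrow P'(c_1,\ldots,c_n)$ for new constants $c_1,\ldots,c_n$. Using closure under conjunction (to form $\phi\wedge P(c_1,\ldots,c_n)$) and under negation, the implicit-definability hypothesis can be rewritten as an entailment between two sentences. Concretely, I would set $\alpha := \phi\wedge P(c_1,\ldots,c_n)$ and $\beta := \phi'\to P'(c_1,\ldots,c_n)$, i.e. $\beta := \neg(\phi'\wedge\neg P'(c_1,\ldots,c_n))$, so that the hypothesis gives $\alpha\models\beta$. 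Here atomicity of $L$ is what guarantees that the atomic sentences $P(c_1,\ldots,c_n)$ and $P'(c_1,\ldots,c_n)$ are actually available in $F$, and the renaming property (part of $L$ being classical) is what gives us $\phi'\in F$ with $\tau(\phi')=(\tau(\phi)\setminus\{P\})\cup\{P'\}$.

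Next I would compute the vocabularies. We have $\tau(\alpha)=\tau(\phi)\cup\{c_1,\ldots,c_n\}$ and $\tau(\beta)=\tau(\phi')\cup\{c_1,\ldots,c_n\}=(\tau(\phi)\setminus\{P\})\cup\{P'\}\cup\{c_1,\ldots,c_n\}$. Since $P\notin\tau(\beta)$ and $P'\notin\tau(\alpha)$, the intersection is $\tau(\alpha)\cap\tau(\beta)=(\tau(\phi)\setminus\{P\})\cup\{c_1,\ldots,c_n\}$. Applying $\craig(L,L')$ to $\alpha\models\beta$ yields $\theta\in F'$ with $\alpha\models\theta$, $\theta\models\beta$, and $P,P'\notin\tau(\theta)$. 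The claim is then that this $\theta$ (as a formula $\theta(c_1,\ldots,c_n)$) is the explicit definition, i.e. $\phi\models P(c_1,\ldots,c_n)\leftrightarrow\theta$.

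To verify this, work with an arbitrary $\mm\in S$ such that $\mm\models\phi$ (note, via clause (5) of classicality, that we may freely expand/reduct by the constants $c_1,\ldots,c_n$, so it suffices to check the biconditional at distinguished elements $a_1,\ldots,a_n$). If $(\mm,\bar a)\models P(\bar c)$, then $(\mm,\bar a)\models\alpha$, hence $(\mm,\bar a)\models\theta$. Conversely, suppose $(\mm,\bar a)\models\theta$; we must show $(\mm,\bar a)\models P(\bar c)$. Consider the reduct to $\tau(\phi)\setminus\{P\}$ together with the constants, and re-expand it, this time interpreting the symbol $P'$ exactly as $P$ was interpreted in $\mm$ — call the result $\mn$; then $\mn\models\phi'$ (by renaming) and $\mn$ agrees with $(\mm,\bar a)$ on $\tau(\theta)$, so $\mn\models\theta$, hence $\mn\models\beta$, i.e. $\mn\models P'(\bar c)$, which by construction means $\bar a\in P^\mm$, i.e. $(\mm,\bar a)\models P(\bar c)$. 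Finally $\phi\models P(\bar c)\leftrightarrow\theta$ follows by assembling these two directions using closure under conjunction and negation (the definition of $\leftrightarrow$ given earlier).

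The main obstacle — and the reason the proposition is stated with the classicality hypotheses — is making the re-expansion argument rigorous in the abstract setting: one must be careful that the structure $\mn$ obtained by "reinterpreting $P'$ as $P^\mm$" genuinely lies in $S$ and is related to $\mm$ in the way needed. This is exactly what clauses (3), (4), (5) and the renaming clause of the definition of \emph{classical} are designed to supply (closure of $S$ under adding constants, under reducts, the reduct condition $\mm\models\phi\iff\mm\rest\tau(\phi)\models\phi$, and renaming), and atomicity is what lets us speak of $P(\bar c)$ as an honest sentence of $L$. So the proof is essentially bookkeeping once these closure properties are invoked correctly; no compactness is needed for this direction.
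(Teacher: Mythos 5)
Your proposal is correct and follows essentially the same route as the paper's proof: rewrite the implicit-definability hypothesis as the single entailment $\phi\wedge P(c_1,\ldots,c_n)\models\phi'\to P'(c_1,\ldots,c_n)$, apply $\craig(L,L')$ to obtain $\theta$ with $P,P'\notin\tau(\theta)$, and conclude $\phi\models P(c_1,\ldots,c_n)\leftrightarrow\theta$. The only difference is that you spell out the final verification (including the renaming/re-expansion argument for the converse direction), which the paper leaves implicit.
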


\begin{proof}Suppose $\phi\in F$ such that 
 if $\phi'$ denotes $\phi$ with $P$ renamed as $P'$, which is not in $\tau(\phi)$, then $\phi\wedge\phi'\models P(c_1,\ldots,c_n)\leftrightarrow P'(c_1,\ldots,c_n)$.
Thus
$\phi\wedge P(c_1,\ldots,c_n)\models\phi'\to P'(c_1,\ldots,c_n).$ By $\craig(L,L')$ there is $\theta\in F'$  such that 
$\phi\wedge P(c_1,\ldots,c_n)\models\theta$, $\theta\models\phi'\to P'(c_1,\ldots,c_n),$ and $\tau(\theta)=(\tau(\phi)\setminus\{P\})\cup\{c_1,\ldots,c_n\}$. Then
$\phi\models P(c_1,\ldots,c_n)\leftrightarrow\theta$. \end{proof}

 Even compactness does not help to prove the converse: The logic $\lcofc$ (see section~\ref{Generalized quantifiers}) has an extension (the so-called ``Beth-closure'' of $\lcofc$) which has the Beth property and is compact but it does not satisfy interpolation, not even Souslin-Kleene interpolation \cite{MR808815}. However, there is a stronger form of the Beth property, \emph{projective} $\Beth(L)$, which is actually equivalent to $\craig(L)$ for regular logics (\cite[p. 76]{MR819531}). Interpolation cannot be weakened to Souslin-Kleene interpolation in Proposition~\ref{cib}, as the $\Delta$-closure of $\lcofc$ does not have the Beth
property \cite{MR546916,MR1045373}.

\begin{definition}\begin{enumerate}
\item A classical abstract logic $L=(S,F,\models,\tau)$ is \emph{fully classical} if $S$ consists of \emph{all} structures of first order logic.
 
 \item  The logic $L$ satisfies \emph{relativization} if for every $\phi\in F$ and every formula $\psi(x)\in F$ there is a sentence $\theta\in F$ such that for all structures $\mm$
 we have $\mm\models\theta$ if and only if the relativization\footnote{The relativization $\mm^{(A)}$ of $\mm$ to $A$
 is the structure which has $A$ as the domain and interpretations of the non-logical symbols as follows: The $n$-ary relation symbols $R$ in the vocabulary of $\mm$ are interpreted as $R^\mm\cap A^n$. The $n$-ary function symbols $f$ are interpreted as restrictions of $f^\mm$ to $A^n$. Finally, the constant symbols are interpreted in the same way as in $\mm$.} $\mm^{(A)}$ of $\mm$ to $A$ satisfies $\phi$, where $A$ is the set of $a\in M$ such that $\mm\models\psi(a)$, and it is assumed that $A$ is closed under the functions of $\mm$ and also contains the constants of $\mm$.
\item The classical logic $L$ satisfies \emph{isomorphism closure}, if $\mm\models\phi\iff\mn\models\phi$ whenever $\mm,\mn\in S$, $\mm\cong\mn$, and $\phi\in F$.
\end{enumerate}
\end{definition}

Examples of fully classical  abstract logics satisfying relativization and  isomorphism closure are  first order logic, $\looo$, second order logic as well as many  extensions of first order logic by generalized quantifiers.

\begin{theorem}[\cite{MR819550}]\label{13}
Assume that the abstract logic $L$ is fully classical, satisfies both relativization and isomorphism closure, and has the property that the vocabulary of each sentence is finite.\footnote{This is called the ``Finite Occurrence property''. It essentially means that each sentence is a finite string a symbols. It follow easily from compactness.}
If $L$ has the (many-sorted) Robinson property then it  satisfies the Compactness Theorem.
\end{theorem}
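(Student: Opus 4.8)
The plan is to show that every finitely satisfiable $\Sigma\subseteq F$ is satisfiable (this is the Compactness Theorem for $L$); I describe the argument for countable $\Sigma=\{\phi_i:i<\omega\}$ — we may also assume $\tau(\Sigma)$ relational — the general case being the same with the partial order of finite subsets of $\Sigma$ in place of $\omega$. The idea is to pack the finite approximations of $\Sigma$ into one first‑order structure, legitimately because $L$ is fully classical, to read off from it an $L$‑theory expressing ``there is a stage over which all of $\Sigma$ holds'', and to prove that theory consistent using the many‑sorted Robinson property.

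First I would fix, for each $n$, a structure $\mathfrak{A}_n$ with $\tau(\mathfrak{A}_n)=\tau:=\tau(\Sigma)$ and $\mathfrak{A}_n\models\phi_j$ for every $j\le n$; these exist by finite satisfiability. As $L$ is fully classical, $S$ contains the two‑sorted structure $\mathfrak{C}$ built from a ``stage'' sort carrying $(\omega,<)$ with constants $c_n$ naming $n$, an ``object'' sort carrying the disjoint union of the $\mathfrak{A}_n$, a projection $\pi$ sending an object to the stage of the $\mathfrak{A}_n$ it comes from, and each symbol of $\tau$ interpreted as the disjoint union of its interpretations in the $\mathfrak{A}_n$; renaming the auxiliary symbols, we arrange that $\tau$, $\rho_0:=\{<,(c_n)_n\}$, $\pi$ and a further fresh stage‑constant $d$ are pairwise disjoint. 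Relativization applied to $\phi_i$ yields an $L$‑sentence $\chi_i$ asserting ``every $\pi$‑fibre over a stage $\ge c_i$ satisfies $\phi_i$'' (relativize $\phi_i$ along $\pi$ to speak of a single fibre, then relativize the resulting sentence so that the stage ranges only over $\{y:c_i\le y\}$), and isomorphism closure together with the choice of the $\mathfrak{A}_n$ gives $\mathfrak{C}\models\chi_i$ for all $i$. Now set $\Sigma_0:=\{\sigma\in F:\mathfrak{C}\rest\rho_0\models\sigma\}$ (complete), $\Sigma_1:=\{\sigma\in F:\mathfrak{C}\models\sigma\}$ (complete, hence consistent, and $\supseteq\Sigma_0$ by classicality), and $\Sigma_2:=\Sigma_0\cup\{c_n<d:n<\omega\}$ in the vocabulary $\rho_0\cup\{d\}$; by the renaming, $\tau(\Sigma_1)\cap\tau(\Sigma_2)=\rho_0=\tau(\Sigma_0)$. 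Hence \emph{if $\Sigma_2$ is consistent}, the Robinson property provides $\mathfrak{N}\models\Sigma_1\cup\Sigma_2$, and instantiating the sentences $\chi_i\in\Sigma_1$ at $d^{\mathfrak{N}}$ (legitimate since $c_i<d$ holds in $\mathfrak{N}$) shows that the $\pi$‑fibre of $\mathfrak{N}$ over $d^{\mathfrak{N}}$ — again a member of $S$, since $L$ is fully classical and $S$ is therefore closed under relativization — is a model of every $\phi_i$, i.e.\ a model of $\Sigma$. So the theorem reduces to the consistency of $\Sigma_2$.

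The consistency of $\Sigma_2$ is, I expect, the real content and the chief obstacle: it says that the complete $L$‑theory of $(\omega,<,(n)_n)$ is consistent with the assertion that some element lies above all the $c_n$. This is manifestly \emph{finitely} satisfiable, so it is exactly the kind of statement compactness is meant to supply, and there is no naive witness — indeed a sufficiently expressive logic (second‑order logic, or $\lqwo$) can pin $(\mathbb{N},<,(c_n)_n)$ down up to isomorphism and so makes this very theory inconsistent; such logics are of course not compact, and the theorem predicts (correctly) that they lack the many‑sorted Robinson property. The consistency of $\Sigma_2$ must therefore itself be derived from the hypotheses, by a bootstrapped version of the construction above applied to the much simpler ``theory'' describing $(\omega,<,(c_n)_n)$ together with a point on top; this is where the \emph{many‑sortedness} of the Robinson property, in concert with relativization and isomorphism closure, does the essential work, and it is carried out in \cite{MR819550}. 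By comparison, the surrounding verifications — that the $\chi_i$ really lie in $F$ (the regular logics to which the theorem applies being closed under the relevant first‑order operations), that relativizations of structures in $S$ remain in $S$, and that the finite occurrence property keeps each $\tau(\chi_i)$ finite for the bookkeeping — should be routine.
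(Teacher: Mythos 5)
Your reduction of the theorem to the consistency of $\Sigma_2$ is honest, but that is precisely the step the whole proof has to deliver, and your proposal does not deliver it. As you yourself observe, $\Sigma_2$ — the complete $L$-theory of $(\omega,<,(c_n)_{n<\omega})$ together with ``$d$ is above every $c_n$'' — can be outright inconsistent for a logic $L$ that pins down $(\omega,<)$, so its consistency cannot be ``routine'' and cannot be obtained by a bootstrapped re-run of the same construction: any attempt to get it from the Robinson property again needs a \emph{complete} shared theory containing $<$ and all the $c_n$, and a consistent extension by ``$d$ above everything'', which is the identical problem one level down. The circularity is not removable by iteration; it is removed by changing what goes into the shared vocabulary.

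That change is the actual content of the paper's proof (which otherwise sets things up much as you do: disjoint union of the finite-stage models, a stage sort, a projection, and relativization to speak about fibres). Instead of putting $<$ and the constants $c_n$ into the common vocabulary $\tau_0$, one puts only unary predicates $P_n$ interpreted as the initial segments $\{0,\dots,n-1\}$ of the stage sort. Then the second theory $\Sigma_2$ is not an ad hoc set of sentences but the complete $L$-theory of an \emph{actual structure} $\mathfrak{A}''$ with domain $\omega+1$, $P_n^{\mathfrak{A}''}=\{0,\dots,n-1\}$ and $c^{\mathfrak{A}''}=\omega$ — so its consistency is free. The price is that one must check $\mathfrak{A}^*\rest\tau_0\equiv_L\mathfrak{A}''\rest\tau_0$, and this is exactly where the finite occurrence property and isomorphism closure earn their keep: a sentence over $\tau_0$ mentions only finitely many $P_n$, and $\omega$ and $\omega+1$ equipped with finitely many nested finite subsets are isomorphic. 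The Robinson property then amalgamates the theory of the big structure with the theory of $\mathfrak{A}''$ over this common reduct, and the fibre over $c$ (an element outside every $P_n$, hence a stage ``above all $n$'') is a model of $\Sigma$; the condition triggering the relativized sentences must accordingly be phrased as ``$x\notin P_{i+1}$'' rather than ``$x\ge c_i$''. Without this device your argument does not close, so as it stands the proposal has a genuine gap at its central step.
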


\begin{proof}We present a rough sketch only and refer to \cite{MR819550} for  a detailed proof. We indicate the main idea of proving countable compactness which should give a good idea of the proof in the general case.
Suppose $T=\{\phi_n:n<\omega\}$ is a counter-example to countable compactness. For each $n$ there is $\mm_n\models\{\phi_m:m<n\}$. W.l.o.g. the domains of the models $\mm_n$ are disjoint and the structures $\mm_n$ are relational. 
Let $\ma$ be the disjoint union of the models $\mm_n$, $n<\omega$. We expand $\ma$ with a new sort $s$ consisting of a copy  of the natural numbers with their natural order $<$ as well as a function $f$ which maps each element of $M_n$ to the  $n$'th natural number in the order $<$. Let $\ma'$ be the expansion. Now in any model $\mb$ that is $L$-equivalent to $\ma'$ the order-type of $<^\mb$ must be $\omega$, for if there were a non-standard number $b$ in the sort $s$ part of $B$, its pre-image $(f^\mb)^{-1}(b)$ would determine a model in which each $\phi_n$ is true, contrary to the inconsistency of $T$.  
Let $\tau_0$ consist of the sort $\{s\}$ and unary predicates $P_n$, $n<\omega$, of sort $s$. Let $\ma^*$ be the expansion of $\ma'$ by letting $P_n^{\ma^*}$ consist of the first $n$ elements of $<$ for all $n<\omega$. Let $\ma''$ be a new $\tau_0\cup\{c\}$-structure, where $c$ is a new constant of sort $s$, which has $\omega+1$ as its domain and $P_n^{\ma''}=\{0,\ldots,n-1\}$ as well as $c^{\ma''}=\omega$. By  isomorphism closure and the finite occurrence assumption, $\ma^*\rest\tau_0\equiv_L\ma''\rest\tau_0$. By the Robinson property, there is $\mb$ such that $\mb\rest\tau(\ma^*)\equiv_L\ma^*$ and $\mb\rest\tau(\ma'')\equiv_L\ma''$. This contradicts the inconsistency of $T$, as $(f^\mb)^{-1}(c^\mb)$ gives rise to a model of each $\phi_n$.
 \end{proof}

The assumption of the finite occurrence property in Theorem~\ref{13} can be considerably weakened, see \cite{MR819550}.

To present an elementary proof of the Robinson property of  first order logic, we recall the following game, due to A. Ehrenfeucht \cite{MR126370}: Let
$L$ be a finite relational vocabulary and $\ma, \mb$
$L$-structures
  such that $A \cap B = \emptyset$. We use $\mbox{EF}_n(\ma,\mb)$
   to denote the $n$-move \ef\ game on
$\ma$ and $\mb$. During each round of the game player $\I$ first
picks an element from one of the models, and then player $\II$
picks an element from the other model. In this way a relation
$p=\{(a_1,b_1),\ldots,(a_n,b_n)\}\subseteq A\times B$ is
built. If $p$ is a partial isomorphism between $\ma$ and $\mb$,
player $\II$ is the winner of this play. Player $\II$ has a
winning strategy in this game if and only if the models $\ma$ and
$\mb$ satisfy the same first order sentences of quantifier rank
at most $n$. If player $\II$ has a winning strategy $\tau$ in
$\mbox{EF}_n(\ma,\mb)$, the set $I_i$ of sets of
 positions
$
\{(a_1,b_1),\ldots,(a_{j},b_{j})\}\subseteq A\times B$
which can be continued to a position
$
\{(a_1,b_1),\ldots,(a_{n-i},b_{n-i})\}\subseteq A\times
B$ in which player $\II$ has used $\tau$, form an
increasing chain $I_n\subseteq I_{n-1}\subseteq\ldots\subseteq I_0$
known as a {\em back-and-forth sequence},  introduced by  R.
\fraisse\ \cite{MR79746}. The name derives from the fact that if
$p\in I_{i+1}$, $|p|=k$, and $a\in M$ (or $b\in M'$), then there is $b\in
M'$ (respectively, $a\in M$) such that $p\cup\{(a,b)\}\in I_i$. In this case 
we say that $I_{i+1}$ satisfies the $k$-back-and-forth condition w.r.t. $I_i$ 
for $k$-sequences between the models $\ma$ and $\mb$. 
Conversely, if a back-and-forth sequence $I_n\subseteq
I_{n-1}\subseteq\ldots\subseteq I_0$ exists, then player $\II$ can
use it to win the game $\mbox{EF}_n(\ma,\mb)$. So back-and-forth
sequences and winning strategies of $\II$ go hand in hand. This
explains why the game is generally called the
Ehrenfeucht-\fraisse\ game. The infinite version, where players play $\omega$ moves, is denoted 
$\mbox{EF}(\ma,\mb)$.

The following proof of the Robinson property for $\loo$, using the
back-and-forth method just described, is due to Per  Lindstr\"om (see \cite{MR3409279}). This
is the argument that became the proof of the so-called Lindstr\"om's Theorem (\cite{MR244013}).
%
%
%

\begin{theorem}[\cite{MR244013}]\label{lindstr}Suppose $L$ is a regular abstract logic in the sense of \cite[Chapter II]{MR819531} such that  $L$ is  compact, has the L\"owenheim property,  and has the property that the vocabulary of each sentence is finite.\footnote{This actually follows from compactness with an easy argument.} Then $L$ has the Robinson property and, in fact, $L\equiv\loo.$
\end{theorem}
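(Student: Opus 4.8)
The plan is to establish directly the stronger assertion $L\equiv\loo$; the Robinson property then follows, because $\loo$ has it (classically, and in fact by a back-and-forth argument of the same flavour as the one below) and the Robinson property is plainly invariant under the equivalence $\equiv$ of logics. Since a regular logic extends $\loo$, it suffices to prove $L\le\loo$: that every $\phi\in F$ is equivalent to a first order sentence. Using renaming and passage to reducts (both available for regular logics) we may assume $\tau(\phi)$ is finite and relational. Suppose, towards a contradiction, that $\phi$ is equivalent to no first order sentence. Over a finite relational vocabulary the relation $\equiv_n$, of ``agreeing on all first order sentences of quantifier rank $\le n$'', has finitely many classes, each an $\ec{\loo}$-class. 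If for some $n$ the class $\{\mm:\mm\models\phi\}$ were a union of $\equiv_n$-classes, $\phi$ would be equivalent to the (first order) disjunction of the sentences defining those classes; so for every $n$ there is an $\equiv_n$-class meeting both $\{\mm:\mm\models\phi\}$ and $\{\mm:\mm\models\neg\phi\}$. Conjoining $\phi$, resp.\ $\neg\phi$, with the first order sentence defining that class and applying the L\"owenheim property, we get \emph{countable} $\ma_n\models\phi$ and $\mb_n\models\neg\phi$ with $\ma_n\equiv_n\mb_n$; equivalently, $\II$ wins $\mbox{EF}_n(\ma_n,\mb_n)$; equivalently, there is a back-and-forth sequence $I_n\subseteq I_{n-1}\subseteq\cdots\subseteq I_0$ between $\ma_n$ and $\mb_n$ as described above.

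The heart of the argument is to compress all of this, uniformly in $n$, into a single structure over a fixed \emph{finite} vocabulary $\tau^{+}$ so that everything relevant becomes first order expressible. Let $\tau^{+}$ consist of: a sort $U$ carrying a copy of $\tau$ (the ``left'' model); a sort $V$ carrying a disjoint copy of $\tau$ (the ``right'' model); a sort $W$ with a binary relation $<$ (the game ``levels''); and a sort $P$ of positions, equipped with a unary function assigning each position its level in $W$, a unary function sending a position to the position obtained by deleting its last move, two unary functions returning the last $U$-move and last $V$-move of a position, and a ternary relation recording which pairs lie in the finite partial map a position codes. Let $T_{0}$ be the first order $\tau^{+}$-theory expressing: $<$ is a discrete linear order; the level/position apparatus is coherent and the empty position lies at every level; the ``forth-and-back'' condition holds --- from any position at a non-minimal level, every element of $U$ is matched by an element of $V$, and conversely, via some extending position at the immediately preceding level; and every position codes a $\tau$-partial isomorphism --- this last a \emph{finite} conjunction, one clause per symbol of the finite relational $\tau$. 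Adjoin the two $L$-sentences ``$\phi$ relativized to $U$'' and ``$\neg\phi$ relativized to $V$'' (available since regular logics are closed under relativization), together with the first order sentences $\lambda_n$ = ``$<$ has at least $n$ elements''. Let $T$ be the union. From $\ma_n$, $\mb_n$ and their length-$n$ back-and-forth sequence one reads off a $\tau^{+}$-structure satisfying $T_{0}$, the two relativized sentences, and $\lambda_0,\ldots,\lambda_n$; hence every finite subset of $T$ has a model in $L$.

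Now apply the compactness of $L$ to get $\mathfrak C\models T$, and the L\"owenheim property to take $\mathfrak C$ countable. Let $\ma^{*}$, $\mb^{*}$ be the $\tau$-structures living on $U^{\mathfrak C}$, $V^{\mathfrak C}$; by relativization $\ma^{*}\models\phi$ and $\mb^{*}\models\neg\phi$, and both are countable. Since all the $\lambda_n$ hold, for each $k$ there is a level of $<^{\mathfrak C}$ with at least $k$ predecessors; let $J_k$ be the set of finite partial maps coded by positions of $\mathfrak C$ whose level has at least $k$ predecessors. Using $T_{0}$ one checks that $J_0\supseteq J_1\supseteq\cdots$, that each $J_k$ is a nonempty set of partial isomorphisms from $\ma^{*}$ to $\mb^{*}$ (nonempty since the empty position lies at every level), and that it has the back-and-forth property: given $p\in J_{k+1}$, take a position coding $p$, pass to the immediately preceding level --- still having at least $k$ predecessors, by discreteness --- and use ``forth-and-back''. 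So $J_0\supseteq J_1\supseteq\cdots$ is a back-and-forth sequence of length $\o$ between the countable $\ma^{*}$ and $\mb^{*}$, whence $\ma^{*}\cong\mb^{*}$. But $L$ is closed under isomorphism, so $\ma^{*}\models\phi$ iff $\mb^{*}\models\phi$ --- contradicting $\ma^{*}\models\phi$ and $\mb^{*}\models\neg\phi$. Hence $\phi$ is equivalent to a first order sentence, $L\le\loo$, and so $L\equiv\loo$.

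I expect the real work to be in the second paragraph: choosing $\tau^{+}$ and $T_{0}$ so that ``forth-and-back'' and ``codes a $\tau$-partial isomorphism'' are honestly first order. What makes this possible is that positions are first-class objects of the sort $P$, so ``extend by one more move'' is a built-in function and one never has to quantify over finite sequences of unbounded length, while ``partial isomorphism'' need only be asserted atom by atom for the finitely many relation symbols of the finite relational $\tau$. Two lesser points also need attention: that the L\"owenheim property, although phrased for single sentences, can be used (via compactness and the finiteness of $\tau^{+}$) to make $\ma^{*}$ and $\mb^{*}$ countable; and that the initial reductions to a finite relational vocabulary are legitimate for a regular abstract logic. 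The remaining ingredients --- that $\equiv_n$ has finitely many classes, the extraction of the back-and-forth sequence, and the passage from a countable back-and-forth sequence to an isomorphism --- are classical.
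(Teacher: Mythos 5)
Your overall architecture is close to the paper's, with one organizational difference: the paper proves the Robinson property directly by the same coding-plus-compactness device and then runs essentially your argument a second time to get $L\equiv\loo$, whereas you prove $L\le\loo$ once and transfer the Robinson property from $\loo$ (legitimate, though it makes the theorem silent about $\loo$ itself unless one supplies an independent proof of Robinson for $\loo$, as the paper does elsewhere). But there is a genuine gap at your last step. Your theory $T$ forces the level order $<^{\mathfrak C}$ only to be \emph{infinite} (via the $\lambda_n$), and from this you extract a chain $J_0\supseteq J_1\supseteq\cdots$ of nonempty sets of partial isomorphisms in which a member of $J_{k+1}$ extends, in either direction, into $J_k$. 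That is a back-and-forth sequence of length $\omega$ in the sense of the \emph{finite} games: it shows $\II$ wins $\mbox{EF}_n(\ma^*,\mb^*)$ for every $n$, i.e.\ $\ma^*\equiv\mb^*$, and nothing more. It does not yield a winning strategy in the infinite game $\mbox{EF}(\ma^*,\mb^*)$, because each extension step strictly decreases the index $k$, so no fixed starting position can be prolonged through $\omega$ rounds; and elementary equivalence of countable structures does not imply isomorphism. Concretely, $(\oN,<)$ and $(\oN+\oZ,<)$ with $J_k$ the set of positions from which $\II$ can survive $k$ more rounds satisfy every condition you verify, yet are not isomorphic. Nothing in $T$ excludes $<^{\mathfrak C}$ having order type $\omega$, and in that case the argument collapses exactly here.

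The missing idea is the one the paper leans on when it passes to a model with $(R,<)$ \emph{non-well-founded}, not merely infinite. Add to $T$ new constants $c_0,c_1,\ldots$ of the level sort with axioms $c_{n+1}<c_n$ (each finite subset is still realized in the length-$n$ coding structures), or require $<$ to have a last element, so that discreteness plus infinitude produces an infinite descending chain from the top. Either way the limit model contains levels $\ell_0>\ell_1>\ell_2>\cdots$, each the immediate predecessor of the previous one; the maps coded by positions sitting on this chain form a single back-and-forth \emph{system}, i.e.\ a winning strategy for $\II$ in the infinite game, and only then does countability of $\ma^*$ and $\mb^*$ give $\ma^*\cong\mb^*$. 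With that repair your proof goes through; the softer points you flag (applying the L\"owenheim property to the infinite theory $T$, the reduction to finite relational vocabularies) are glossed at the same level of detail in the paper's own sketch.
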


\begin{proof}
In the Robinson property, suppose $\Sigma_1$ and $\Sigma_2$ are consistent extensions of a complete theory $\Sigma_0$. Let $\tau_1$ be the vocabulary of $\Sigma_1$, $\tau_2$ that of $\Sigma_2$, and $\tau$ that of $\Sigma_0$, where $\tau=\tau_1\cap\tau_2$. By
assumption, there is a model $\mm_1$ of $\Sigma_1$ and a model $\mm_2$ of
$\Sigma_2$ and then  $\mm_1\rest \tau\equiv\mm_2\rest \tau$. Thus there is, for any $n\in\oN$,  a
back-and-forth sequence $(I_i:i\le n)$ for $\mm_1\rest \tau$ and
$\mm_2\rest \tau$. Let $\tau'_2$ be a copy of $\tau_2$ such that  $\tau_2\cap
\tau'_2=\emptyset$. Let $\tau'$ be the vocabulary resulting from $\tau$ in
this translation. Let $\mm_2'$ be the translation of $\mm_2$ to
the vocabulary $\tau'_2$. Let $R$ be a new unary and $<$ a new binary predicate symbol.
Let $\Gamma$ be the set of first order sentences
which state:

\begin{enumerate}
\item The complete first order $\tau_1$-theory of $\mm_1$.

\item The complete first order $\tau'_2$-theory of $\mm_2$.

\item $(R,<)$  is a non-empty linear order in which
every element with a predecessor has an immediate predecessor.

\item First order sentences which state, by means of   new
$2n+1$-ary predicates $I^n$, that there is, for each $n<\omega$,  an $n$-back-and-forth sequence 
$I^n(i,\cdot,\cdot)$, parametrized by $i\in R$, between the
$\tau$-reduct and the $\tau'$-reduct of the universe, meaning that if $i-1$ denotes the predecessor (if it exists) of $i$ in $R$, then $I^n(i,\cdot,\cdot)$ satisfies the $n$-back-and-forth condition w.r.t. $I^{n+1}(i-1,\cdot,\cdot)$ for $n$-sequences between the $\tau$-reduct and the $\tau'$-reduct of the universe.

\end{enumerate}

For all $n\in\oN$ there is a model of $\Gamma$ with $(R,<)$ of length
$n$. By the Compactness Theorem, there is a  model $\mn$ of $\Gamma$
with $(R,<)$ non-well-founded.  By the L\"owenheim property we may
assume $\mn$ is countable.  Let $\mn_1=\mn\rest \tau_1$ and
$\mn_2=\mn\rest \tau'_2$. Since  $(R,<)^\mn$ is
non-well-founded, player $\II$ has a winning strategy for the
infinite game $\mbox{EF}(\mn_1,\mn_2)$. Since $\mn_1$ and $\mn_2$ are
countable,  $\mn_1\cong\mn_2$. This implies that $\mn_1$ can be
expanded to a model of $\Sigma_1 \cup \Sigma_2$. This ends the proof of Robinson property.

Lindstr\"om observed that the above argument works for any logic
which is countably compact and has  the L\"owenheim
property. But are there such extensions of $\loo$? To see why
there are none,
%
%
let us write $\mm\sim_n\mm'$ if player $\II$ has a winning
strategy in $\mbox{EF}_n(\ma,\mb)$. This equivalence relation divides the
class of all models with a finite vocabulary $L$  into a finite number of
equivalence classes $C$, each definable by a first order sentence of
quantifier rank at most $n$, namely the conjunction of all first order sentence of quantifier rank at most $n$
that are true in one (hence all) models in $C$. 
Suppose now $\phi\in F$ is not first order definable, i.e. there is no first order $\psi$ such that $\phi\equiv\psi$ and $\tau(\phi)=\tau(\psi)$. By our assumption, $\tau(\phi)$ is finite. There is no $n$ such that the class of models of $\phi$ is closed under $\sim_n$. In other words, for all $n$ there are models $\mm_n$ and $\mn_n$ such that $\mm_n\sim_n\mn_n$, $\mm_n\models\phi$ and $\mn_n\models\neg\phi$. By countable compactness there are elementarily equivalent $\mm$ and $\mn$ such that $\mm\models\phi$ and $\mm\models\neg\phi$. Now we can continue as above and obtain two isomorphic models, one of $\phi$ and the other of $\neg\phi$, a contradiction.
\end{proof}

An amusing corollary of the above Lindstr\"om Theorem is that $\loo$ satisfies the Souslin-Kleene Interpolation theorem: Consider $\Delta(\loo)$. It is a regular logic, compact and has the L\"owenheim property. Hence $\Delta(\loo)\equiv\loo$.
There is no similarly easy way (as far as is known) to deduce the Beth property or the Craig Interpolation property directly from 
Lindstr\"om's Theorem. 
 
The proof of Theorem~\ref{lindstr} actually gives the following slightly more general result:

\begin{theorem}[\cite{MR819534}, see also \cite{MR2134728}]
Suppose $L$ is a regular abstract logic, except that $L$ is \emph{not} assumed to be closed under negation. If $L$ is  compact, satisfies the L\"owenheim property, and has the property that the vocabulary of each sentence is finite, then any two disjoint $\ec{L}$-classes can be separated by an $\ec{\loo}$-class.
\end{theorem}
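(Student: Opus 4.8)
The plan is to run the argument in the second half of the proof of Theorem~\ref{lindstr} (the part that upgrades to $L\equiv\loo$), but in a ``one-sided'' form that never negates the given $L$-sentences. Let $K_0,K_1$ be the classes of models of $\phi_0,\phi_1\in F$ and put $\tau=\tau(\phi_0)\cap\tau(\phi_1)$; by the finite occurrence assumption $\tau$ is finite. Using renaming applied to $\phi_1$ we may assume the non-logical symbols of $\phi_0$ and of $\phi_1$ lying outside $\tau$ are pairwise disjoint. Suppose toward a contradiction that $K_0$ and $K_1$ are \emph{not} separated by any $\ec{\loo}$-class of vocabulary $\tau$, i.e.\ there is no first order $\theta$ with $\tau(\theta)\subseteq\tau$, $\phi_0\models\theta$ and $\phi_1\models\neg\theta$.

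First I would extract, for each $n<\omega$, models $\mm_n\models\phi_0$ and $\mn_n\models\phi_1$ with $\mm_n\rest\tau\sim_n\mn_n\rest\tau$. Since $\tau$ is finite, $\sim_n$ has only finitely many classes on $\tau$-structures, each defined by its canonical first order $\tau$-sentence of quantifier rank $\le n$. Let $\theta_n$ be the disjunction of the canonical sentences of those $\sim_n$-classes $C$ for which $\mm\rest\tau\in C$ for some $\mm\models\phi_0$. Then $\tau(\theta_n)\subseteq\tau$, $\theta_n$ has quantifier rank $\le n$, and $\phi_0\models\theta_n$. Since $\theta_n$ does not separate $K_0$ from $K_1$, we must have $\phi_1\not\models\neg\theta_n$, so some $\mn_n\models\phi_1$ satisfies $\mn_n\rest\tau\models\theta_n$; then $\mn_n\rest\tau$ lies in one of the disjunct-classes, which also contains $\mm_n\rest\tau$ for a suitable $\mm_n\models\phi_0$, whence $\mm_n\rest\tau\sim_n\mn_n\rest\tau$.

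Next I would copy the compactness construction from the proof of Theorem~\ref{lindstr} verbatim. Let $\tau'$ be a disjoint copy of $\tau$ and $\phi_1'$ the rewriting of $\phi_1$ that renames its $\tau$-part to $\tau'$; let $\Gamma$ be the $L$-theory consisting of $\phi_0$, $\phi_1'$, first order sentences saying that $(R,<)$ is a non-empty linear order in which every element with a predecessor has an immediate one, together with new constants $c_0>c_1>c_2>\cdots$ in it, and first order sentences asserting, for each $n$, the existence of an $n$-back-and-forth sequence $I^n(i,\cdot,\cdot)$ parametrized by $i\in R$ between the $\tau$-reduct and the $\tau'$-reduct of the universe, exactly as in clauses (3)--(4) there. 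The structures $\mm_n\uplus\mn_n'$, equipped with $(R,<)$ of sufficient finite length and with the back-and-forth sequences witnessing $\mm_n\rest\tau\sim_n\mn_n\rest\tau$, satisfy every finite subset of $\Gamma$; hence by the Compactness Theorem of $L$ there is $\mn\models\Gamma$, and the constants $c_i$ force $(R,<)^\mn$ to be non-well-founded, while by the L\"owenheim property we may take $\mn$ countable. Then player $\II$ wins $\mbox{EF}(\mn\rest\tau,\mn\rest\tau')$ by following the back-and-forth sequences along an infinite descending chain of $(R,<)^\mn$, so $\mn\rest\tau\cong\mn\rest\tau'$. Transporting the interpretations of the symbols of $\phi_1'$ along this isomorphism and unrenaming, using renaming closure and isomorphism closure of $L$, produces a single structure in $S$ that models $\phi_0$ and models $\phi_1$, i.e.\ an element of $K_0\cap K_1$ — contradicting disjointness.

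The main obstacle, and essentially the only place real care is needed, is the bookkeeping in the third step: arranging the predicates $I^n$ in $\Gamma$ so that simultaneously (a) the finite structures $\mm_n\uplus\mn_n'$ verify every finite fragment of $\Gamma$ and (b) any model of all of $\Gamma$ with $(R,<)$ non-well-founded yields a genuine winning strategy for $\II$ in the length-$\omega$ game; this is precisely the indexing carried out in the proof of Theorem~\ref{lindstr}, so I would simply invoke it rather than redo it. It is worth emphasizing that, in contrast with Lindstr\"om's theorem proper, the argument never forms $\neg\phi_0$ or $\neg\phi_1$ — negation is applied only to first order auxiliary sentences, which $\loo$ supplies — which is exactly why closure of $L$ under negation can be dropped, the price being that one obtains only separation of disjoint $\ec{L}$-classes by an $\ec{\loo}$-class instead of $L\equiv\loo$.
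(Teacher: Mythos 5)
Your proof is correct and is essentially the argument the paper intends: the paper gives no separate proof, stating only that the proof of Theorem~\ref{lindstr} yields this result, and your write-up is exactly that adaptation. In particular, replacing the step ``pick $\mm_n\models\phi$, $\mn_n\models\neg\phi$ with $\mm_n\sim_n\mn_n$'' by the disjunction of the canonical $\sim_n$-sentences realized by reducts of models of $\phi_0$, together with the failure-of-separation hypothesis, is precisely the right way to eliminate negation of $L$-sentences while keeping the compactness/L\"owenheim back-and-forth construction intact.
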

 
For Lindstr\"om Theorems for fragments of first order logic we refer to \cite{MR2529656}.
 
\section{Generalized quantifiers}\label{Generalized quantifiers}

Sometimes one may want a logic that can express some particular property, be it cardinality, cofinality, connectedness, well-foundedness, or some other structural property. One way to accomplish this is by means of the concept of a generalized quantifier, introduced in \cite{MR89816} and extended in \cite{MR244012}.  Generalized quantifiers allow one to add a particular feature to a logic. Unsurprisingly, there is no guarantee that such an addition results in a nice logic, e.g. a logic with interpolation. To obtain a nice logic one probably has to add many generalized quantifiers. But then another problem arises. Any logic (closed under substitution) can be represented as the result of adding a number of generalized quantifiers to first order logic. Since it is unlikely that every logic is ``nice'' in any sense, it makes sense to limit oneself to adding just one or finitely many generalized quantifiers. 
 
 The first generalized quantifier to be considered (in \cite{MR89816}) was $$Q_\alpha x\phi(x)\iff|\{a:\phi(a)\}|\ge\aleph_\alpha.$$ Here $\alpha$ is a fixed ordinal chosen in advance. The best-known cases are $\alpha=0$ or $\alpha=1$. 
The extension $L(Q_\alpha)$  of first order logic by this quantifier has the L\"owenheim property for $\alpha=0$ and is  countably compact for $\alpha=1$. The following simple result, due to Keisler, is from \cite{MR453484}:

\begin{theorem}\label{17}
$L(Q_\alpha)$ does not have the interpolation property.
\end{theorem}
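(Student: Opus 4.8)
The plan is to exhibit two sentences $\phi$ and $\psi$ in the vocabulary of $L(Q_\alpha)$ with $\phi\models\psi$ for which no interpolant can exist, using a cardinality argument. The standard counterexample (Keisler) concerns equivalence relations: consider the class of structures $(M,E,\ldots)$ where $E$ is an equivalence relation. Using $Q_\alpha$ one can say ``$E$ has at least $\aleph_\alpha$ classes'' by introducing an auxiliary unary predicate (or function) picking representatives, and one can also say ``$E$ has at most $\aleph_\alpha$ classes'' in a suitable enriched vocabulary. More precisely, I would take $\phi$ to assert, in a vocabulary containing $E$ and auxiliary symbols $\sigma_1$, that the number of $E$-classes is $\ge\aleph_{\alpha+1}$ (this is expressible: there is an injection from an $\aleph_{\alpha+1}$-sized set, captured by a $Q_{\alpha+1}$-style coding, or directly by nesting $Q_\alpha$ with auxiliary orderings), and take $\neg\psi$ to assert, in a vocabulary containing $E$ and disjoint auxiliary symbols $\sigma_2$, that the number of $E$-classes is $\le\aleph_\alpha$. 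Arranging $\tau(\phi)\cap\tau(\psi)=\{E\}$ (plus the underlying sort), the relation $\phi\models\psi$ holds because no $E$ can simultaneously have $\ge\aleph_{\alpha+1}$ and $\le\aleph_\alpha$ classes.

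Next I would argue that an interpolant $\theta$ in vocabulary $\{E\}$ cannot exist. The key point is a characterization of which classes of pure equivalence-relation structures $(M,E)$ are definable in $L(Q_\alpha)$: such a class is determined by finitely many ``local'' conditions on the spectrum of class-sizes and the number of classes, with all thresholds bounded by $\aleph_\alpha$ (this is essentially the quantifier-elimination / Feferman--Vaught style analysis of $L(Q_\alpha)$ over equivalence relations). If $\theta$ were an interpolant, it would separate the $\{E\}$-reducts of models of $\phi$ from those of models of $\neg\psi$, i.e. $\theta$ would have to be true in every $(M,E)$ with $\ge\aleph_{\alpha+1}$ classes and false in every $(M,E)$ with $\le\aleph_\alpha$ classes. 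But such a $\theta$, being sensitive to the difference between ``$\le\aleph_\alpha$ many classes'' and ``$>\aleph_\alpha$ many classes'' while being insensitive higher up, cannot exist: one produces two $L(Q_\alpha)$-equivalent structures $(M_1,E_1)$ and $(M_2,E_2)$, one with exactly $\aleph_\alpha$ classes (say all singletons) and one with $\aleph_{\alpha+1}$ classes, which agree on all $L(Q_\alpha)(\{E\})$-sentences — giving the contradiction. Producing this pair is where the real work lies; for $\alpha=1$ one uses a back-and-forth / EF-game argument for $L(Q_1)$ (the $\aleph_1$-version), as alluded to in the discussion of Theorem~\ref{17} in the excerpt, noting that adding uncountably many singleton classes versus $\aleph_1$ singleton classes is invisible to $L(Q_1)$ once the underlying set is large enough.

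The main obstacle is exactly this last step: establishing that the two equivalence structures are $L(Q_\alpha)$-equivalent. For $\alpha=0$ the $\LL(Q_0)$-equivalence is handled by a routine Ehrenfeucht--Fra\"iss\'e analysis with a quantifier accounting for ``infinitely many''; for general $\alpha$ one needs the appropriate generalization of the back-and-forth condition that tracks $\aleph_\alpha$-sized families, and one must check that neither the $Q_\alpha$ quantifier nor first-order quantification can detect the difference between an index set of size $\aleph_\alpha$ and one of size $\aleph_{\alpha+1}$ when the witnessing sets themselves are taken large. The cleanest route, which I would follow, is to invoke the known elimination-of-quantifiers result for the theory of equivalence relations in $L(Q_\alpha)$ (so that every $\{E\}$-sentence reduces to a Boolean combination of statements ``at least $k$ classes of size $\ge m$'' and ``at least $\aleph_\alpha$ classes'' and ``at least $\aleph_\alpha$ classes of size $\ge\aleph_\alpha$'', for finite $k,m$), and then simply observe that both candidate structures satisfy exactly the same such statements. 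Once that is in hand, the non-existence of $\theta$, and hence the failure of interpolation, follows immediately. I would also remark that the same example underlies the fact, cited in the excerpt, that $\Delta(L(Q_1))$ properly extends $L(Q_1)$, since the separating class does live in $\Delta(L(Q_1))$.
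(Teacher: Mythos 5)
Your overall shape is right (equivalence relations, auxiliary transversal-type predicates, and an indistinguishability argument over the reduct vocabulary $\{E\}$), but the concrete choice of $\phi$ breaks the proof. You want $\phi$ to assert, in $L(Q_\alpha)$ with auxiliary symbols, that $E$ has at least $\aleph_{\alpha+1}$ classes. No such sentence exists, at least in the cases that matter most: $L(Q_0)$ has the L\"owenheim property (every satisfiable sentence has a countable model) and $L(Q_1)$ has L\"owenheim--Skolem down to $\aleph_1$, so every satisfiable sentence of $L(Q_\alpha)$ has a model of cardinality $\le\aleph_\alpha$ and hence cannot force $\ge\aleph_{\alpha+1}$ equivalence classes, no matter what auxiliary orderings or codings you add. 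The parenthetical ``this is expressible \dots by nesting $Q_\alpha$ with auxiliary orderings'' is exactly the step that fails, so the implication $\phi\models\psi$ you propose to interpolate is not available in the logic. (Your final indistinguishability claim --- $\aleph_\alpha$ versus $\aleph_{\alpha+1}$ singleton classes are $L(Q_\alpha)$-equivalent --- is fine, but it is separating the wrong pair of classes.)

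The repair is to keep the threshold at $\aleph_\alpha$ on \emph{both} sides, which is what the paper does: $\phi$ says that $E$ is an equivalence relation, $S$ is a transversal (meets every class, at most once), and $Q_\alpha x\,S(x)$; $\psi$ says that any $S'$ meeting every class satisfies $Q_\alpha x\,S'(x)$. Then $\phi\models\psi$ is genuine, the common vocabulary is $\{E\}$, and an interpolant $\theta$ would have to define ``$E$ has at least $\aleph_\alpha$ classes'' in $L(Q_\alpha)$ over $\{E\}$ alone. That is then refuted by an explicit Ehrenfeucht--Fra\"iss\'e game for $L(Q_\alpha)$ played between an equivalence relation with $\aleph_\alpha$ classes of size $\aleph_\alpha$ and one with fewer classes, each still of size $\aleph_\alpha$ (player $II$ answers a large set move either by the complement of the classes already touched, or by the matching class of an already-played point). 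I would also urge you to carry out this game rather than citing a quantifier-elimination result for equivalence relations in $L(Q_\alpha)$ as a black box: the game argument is short, and it is the part of the proof that actually does the work.
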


\begin{proof}
Let $E$ be a binary relation symbol and $S,S'$  unary relation symbols. Let $\phi$ the conjunction of 
\begin{description}
\item[(1)] $\forall x(xEx)\wedge\forall x\forall y(xEy\to yEx)\wedge\forall x\forall y\forall z((xEy\wedge yEz)\to xEz))$.
\item[(2)] $\forall x\exists y(xEy\wedge S(y))$.
\item [(3)]$\forall x\forall y((S(x)\wedge S(y)\wedge x E y)\to x=y)$.
\item[(4)] $Q_\alpha xS(x)$.
\end{description}
Let $\psi$ the sentence
\begin{description}
\item[(5)] $\forall x\exists y(S'(y)\wedge xEy)\to Q_\alpha xS'(x)$.
\end{description}
  
 Clearly, $\phi\models\psi$. Suppose $\phi\models\theta$ and $\theta\models\psi$, where $\theta\in L(Q_1)$ has vocabulary $\{E\}$ only. Then $\theta$ says that $E$ is an equivalence relation with at least $\aleph_\alpha$ equivalence classes. But we now  use an  Ehrenfeucht-\fraisse\ game to show that such a $\theta$ cannot exist. In the Ehrenfeucht-\fraisse\ game of $L(Q_\alpha)$ on two models $\mm$ and $\mn$ players build a partial isomorphism between $\mm$ and $\mn$. Suppose a partial isomorphism $p$ has been built. In addition to the usual moves of the Ehrenfeucht-\fraisse\ game of first order logic,  player $I$ has the option of choosing a subset $X$ of cardinality $\ge \aleph_\alpha$ of one of the models, say $\mm$, after which player $II$  chooses a subset $Y$ of size $\ge \aleph_\alpha$ of the other model, in this case $\mn$. After this player $I$ chooses an element $y\in Y$ and player $II$ responds by choosing an element $x\in X$. The game continues now with the partial mapping $p\cup\{(x,y)\}$ while $X$ and $Y$ are abandoned. It is not hard to show that if $II$ has a winning strategy in such a game of any finite length, then the models are $L(Q_\alpha)$-equivalent (see e.g. \cite[Chapter 10]{MR2768176}). For the current case we can use a model $\mm$ which is an equivalence relation with $\aleph_\alpha$ classes, each of them of size $\aleph_\alpha$. For $\mn$ we can use an equivalence relation with $\aleph_0$ classes, each  of size $\aleph_\alpha$. The winning strategy of player $II$ is the following. Suppose player $I$ has played a set $X\subseteq M$, and player $II$ now has to choose the set $Y\subseteq N$. Let $A$ be the union of all equivalence classes of elements of $\dom(p)$, and $B$ the union of all equivalence classes of elements of $\ran(p)$. If $X\setminus A\ne\emptyset$, we let $II$ choose $N\setminus B$. This is clearly a move that keeps $II$ in the game. We can therefore assume w.l.o.g. that $X\subseteq A$. Since $X\setminus \dom(p)$ is infinite, there is $a\in\dom(p)$ such that $X\setminus \dom(p)$ meets the equivalence class of $a$. Now we let $II$ choose $Y$ to be what is in the equivalence class of $p(a)$ outside $\ran(p)$. This is clearly a winning strategy for any finite number of moves. This contradicts the fact that $\theta$ separates $\mm$ and $\mn$. 
\end{proof}
 
The proof also shows that even Souslin-Kleene Interpolation fails for $L(Q_\alpha)$. In fact, the proof shows\footnote{This is because the strategy of player $II$ works not only for any finite game but even for the game of length $\omega$.} that $\Delta(L(Q_\alpha))\not\le L_{\infty\omega}(Q_\alpha)$. The Beth theorem fails for  $L(Q_1)$ (\cite{MR317923}), but surprisingly, it is consistent, relative to the consistency of ZF, that the weak Beth property holds for $L(Q_1)$ (\cite{MR783593}). 

\begin{theorem}[\cite{MR250861}]\label{undefty}
$L(Q_0)$ does not have the weak Beth property.
\end{theorem}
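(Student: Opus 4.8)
We outline the strategy. The goal is to exhibit an $\lqo$-sentence $\phi$ over a finite (many-sorted) vocabulary $\sigma\cup\{P\}$, with $P$ a new unary predicate, such that
\begin{enumerate}
\item[(i)] $\phi$ implicitly defines $P$: writing $\phi'$ for $\phi$ with $P$ renamed to a fresh $P'$, one has $\phi\wedge\phi'\models P(c)\leftrightarrow P'(c)$ for a new constant $c$;
\item[(ii)] every $\sigma$-structure $\mm\in S$ expands to a model of $\phi$ — hence, by (i), to a unique one; and
\item[(iii)] there is no $\theta\in F$ with $P\notin\tau(\theta)$ and $\phi\models P(c)\leftrightarrow\theta$.
\end{enumerate}
Given such a $\phi$, the hypotheses of $\wbeth(\lqo)$ are satisfied for $\phi$ and $P$ while its conclusion fails, so $\lqo$ does not have the weak Beth property.

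For (i) and (ii) the plan is to let $\phi$ pin $P$ down by a \emph{self-referential} stipulation — $P$ is to be the output of a recursion that mentions $P$ itself — engineered so that, \emph{with $P$ present}, the recursion is $\lqo$-expressible and has exactly one solution over \emph{every} $\sigma$-structure. One natural shape: let $\sigma$ carry (the graph of) a unary function together with a few auxiliary marking relations, and let $\phi$ assert that $P$ is the closure of a $\sigma$-definable ``germ'' under the function, recognised locally through $P$ (every element of $P$ outside the germ has a predecessor in $P$; nothing outside $P$ is sent into $P$; the function on $P$ is as rigid as is needed for the closure to be unambiguous), together with a guarding clause that makes $\phi$ default to $P=\emptyset$ on any $\sigma$-structure to which the set-up does not apply. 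The delicate bookkeeping is to verify that this really yields a \emph{unique} expansion of every $\sigma$-structure.

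For (iii) the plan is to use the \ef\ game for $\lqo$ from the proof of Theorem~\ref{17} (see also \cite[Chapter~10]{MR2768176}): two structures of a common finite vocabulary satisfy the same $\lqo$-sentences of quantifier rank $\le n$ iff player $\II$ wins the $n$-round game in which, besides the usual first-order moves, player $\I$ may play an infinite subset of one model, $\II$ must answer with an infinite subset of the other, $\I$ then picks an element of $\II$'s set and $\II$ an element of $\I$'s set. One would build, for every $n$, two $\sigma$-structures $\mm^1_n,\mm^2_n$ and elements $a_n\in\mm^1_n$, $b_n\in\mm^2_n$, together with an $\II$-strategy for the $n$-round game on $(\mm^1_n,a_n),(\mm^2_n,b_n)$ that keeps $\{(a_n,b_n)\}$ a partial isomorphism, whereas the unique expansions to models of $\phi$ put $a_n$ \emph{inside} $P$ and $b_n$ \emph{outside} $P$. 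Since every $\theta\in F$ has some finite quantifier rank $n$ and $P\notin\tau(\theta)$, we would get $\mm^1_n\models\theta(a_n)\Leftrightarrow\mm^2_n\models\theta(b_n)$, i.e.\ $\theta(a_n)$ and $\neg\theta(b_n)$ cannot both be forced by $\phi$ — a contradiction. Here one uses, as always, that $\lqo$ is regular and that each of its sentences has a finite vocabulary and finite quantifier rank.

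The main obstacle is the tension between (ii) and (iii). If the stipulation defining $P$ is simple enough to make (ii) transparent, it tends to amount to ``an $\lqo$-detectable global case-split followed by a local condition'', and then $P$ is already explicitly $\lqo$-definable, so (iii) fails; if instead $P$ is genuinely non-first-order, some $\sigma$-reduct may have no expansion at all, so (ii) fails. The heart of the argument is to design $\phi$ so that its use of $P$ cannot be ``unrolled'': the recursion must be of length $\omega$ in an essential way, so that on the structures $\mm^1_n,\mm^2_n$ the membership of $a_n,b_n$ in $P$ is decided only ``at infinity'', beyond what the $\lqo$ back-and-forth of depth $n$ can see — while the recursion remains solvable, and uniquely so, over every $\sigma$-structure. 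Once $\phi$ is in hand, the $\II$-strategies needed for (iii) should be routine variants of the ``swap two infinite blocks'' strategy used in the proof of Theorem~\ref{17}.
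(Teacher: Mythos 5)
Your proposal is a strategy outline rather than a proof, and the two places where it remains a placeholder are exactly the places where the real work lies. First, you never produce the sentence $\phi$. The ``closure of a definable germ under a function'' shape you sketch is problematic on its own terms: a predicate $P$ containing the germ, closed under the function, and such that every element of $P$ outside the germ has a predecessor in $P$ is not unique (extra $\mathbb{Z}$-chains can be absorbed into $P$), and the obvious fix --- ``every element of $P$ is at finite distance from the germ'' --- is itself a reachability statement that $\lqo$ cannot express without already having solved the problem. You correctly identify this tension between (ii) and (iii) as ``the heart of the argument,'' but you do not resolve it, and resolving it \emph{is} the theorem. Second, your plan for (iii) --- an \ef-game indistinguishability argument producing pairs $(\mm^1_n,a_n)$, $(\mm^2_n,b_n)$ for each quantifier rank $n$ --- is not carried out either, and it is not the mechanism that makes the known counterexample work.

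The actual construction (Theorem~\ref{undefty} in the paper) takes the implicitly defined predicate to be the \emph{satisfaction relation}: $\phi$ says of $(A,E,R)$ that $(A,E)$ models a finite fragment of ZFC and that either the internal natural numbers of $(A,E)$ are non-standard and $R=\emptyset$, or they are standard and $R$ is the set of pairs $(\eta,f)$ satisfying the inductive Tarski clauses for $\lqo$-formulas of vocabulary $\{E\}$. The quantifier $\qo$ is used precisely to express ``standard versus non-standard,'' which makes the case split $\lqo$-expressible, guarantees that \emph{every} $(A,E)$ has an expansion (take $R=\emptyset$ in the non-standard case), and makes uniqueness provable by induction on genuine formulas in the standard case. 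Non-explicit-definability is then obtained not by a game but by diagonalization: an explicit definition $\eta(x,y)$ of $R$ in a model $(H_\kappa,\in,R)\models\phi$ would be a truth definition for $\lqo$ inside $(H_\kappa,\in)$, and the liar sentence $\xi(x)=\neg\eta(x,\{(x,x)\})$ gives $\mm\models\xi(\xi)\leftrightarrow\neg\xi(\xi)$. This is why the known proof escapes the dilemma you describe: the truth predicate is the canonical object that is implicitly definable (via its recursion clauses) yet provably not explicitly definable, for reasons (Tarski) that have nothing to do with back-and-forth rank. Your sketch, as it stands, is missing both the object and the undefinability mechanism.
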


\begin{proof}The crucial property of $L(Q_0)$ here is that it can express ``every natural number has only finitely many predecessors''.
Let $\phi\in L(Q_0)$ say of its models $(A, E, R)$  that $(A,E)$ is a model of a suitable finite part $T$ of $ZFC$ and either the natural numbers of $(A, E)$ have non-standard elements\footnote{I.e. elements which have infinitely many predecessors. This is where we use $Q_0$.} and $R = \emptyset$ or the natural numbers of $(A, E)$ are standard\footnote{I.e. they all  have only finitely many predecessors. This is again where we use $Q_0$.} and $R$ is the set of pairs $(\eta,f)$, where
$(A, E)$ satisfies 
\begin{enumerate}
\item $\eta \in L(Q_0)$, and 
\item $f$ is a function such that the inductive clauses for
satisfaction of $L(Q_0)$-formulae of the vocabulary $\{E\}$ hold.
\end{enumerate} An example of the inductive clauses here is:
\begin{itemize}
\item $(Q_0x \eta,f)\in R$  if and only if for infinitely many $y$
 there exists $g \in A$  such that $g(z) = f(y)$ for
variables $z \ne x$, $g(x)=y$ and $(\eta, g) \in R$.
\end{itemize}
lf $(A, E, R)$ and $(A, E, R')$ are models of $\phi$, and (w.l.o.g.) the integers of $(A,E)$ are all standard, then one can use induction on formulas of $L(Q_0)$ of the vocabulary $\{E\}$ to prove that $R = R'$.
Moreover, for all $(A, E)$ there is always an $R$ such that
$(A, E, R) \models\phi$. Suppose that there were a formula $\eta(x, y) \in L(Q_0)$ which defines $R$
explicitly in models of $\phi$. Let $\kappa>\omega$ be a  cardinal such that  $\mm = (H_\kappa, \in)$, where $H_\kappa$ is the set of sets whose transitive closure has cardinality $<\kappa$, is a model of $T$. If now $R$ is chosen such that
$(H_\kappa, \in,R)\models\phi$, then
$$R=\{(\phi,f):\phi\in L(Q_0)\mbox{ and $f$ satisfies $\phi$ in $\mm$}\}.$$
Combining this with the choice of $\eta(x, y)$ yields
$$\mm\models \eta(\phi,f)\mbox{ if and only if  $f$ satisfies $\phi$ in $\mm$}.$$
The standard diagonal argument ends the proof. To this end, let $\xi (= \xi(x))$ be the
formula $\neg\eta(x,f)$, where $f$ is a term denoting the function which maps the variable
$x$ to $x$ $(f = \{(x, x)\})$. We now have $\xi\in H_\kappa$
and
$$\mm\models\xi(\xi)\leftrightarrow\eta(\xi,\{(x,x)\})\leftrightarrow\neg\xi(\xi).$$
This contradiction shows that $\phi$ constitutes a counter-example to the weak Beth property for the logic $L(Q_0)$. 
\end{proof}

The point of the above proof is that with $L(Q_0)$ we can capture the standard natural numbers and thereby use induction on objects, which are formulas ``merely''  in the sense of the model $(A,E)$. Otherwise it does not matter what the new quantifier $Q_0$ says. 
The same proof gives a result about any generalized quantifiers $Q^1,\ldots,Q^n$ in the sense of \cite{MR244012}: If  $L(Q^1,\ldots,Q^n)\not\equiv\loo$ has L\"owenheim property, then the weak Beth property fails (\cite{MR244013}).

If we look how much we have to add to $L(Q_0)$ to obtain  weak Beth (or even interpolation), the optimal answer is $\wbeth(L(Q_0), L_{\mbox{\tiny HYP}})$, where ${\mbox{HYP}}$ is the smallest admissible set containing $\omega$ (see \cite{MR424560} for details) and $L_{\mbox{\tiny HYP}}=\looo\cap \mbox{HYP}$. Thus we have to resort to infinitary propositional operations to obtain Beth definability for $L(Q_0)$, and then we get full interpolation as a bonus.

If we want interpolation for a logic of the form $L(Q^1,\ldots,Q^n)$, we need to have countable (or at least ``recursive'') compactness.
However, no proper extension of first order logic is known which has both interpolation and countable compactness.
   
 \medskip
 
\noindent {\bf Problem:}
Are $FO$ quantifiers the {only}  generalized quantifiers  which give rise to a regular logic with  the Craig Interpolation property? The same for the Beth  property and Souslin-Kleene. Does weak Beth hold for $L(Q_1)$ provably in ZFC?
\medskip

\noindent {\bf Problem:}
 Is there any proper extension of first order logic that is (countably) compact and has the interpolation property?
\medskip


One of the most notorious generalized quantifiers is the cofinality quantifier $Q^{\mbox{\tiny cof}}_\kappa$ (\cite{MR376334}).  For a regular $\kappa$ it is defined as follows:
$$\begin{array}{lcl}
\mm\models Q^{\mbox{\tiny cof}}_\kappa xy\phi(x,y,\vec{a})&\iff&\{(c,d):\mm\models\phi(c,d,\vec{a})\}\\
&&\mbox{ is a linear order of cofinality $\kappa$.} 
\end{array}$$
What is remarkable about this quantifier is that  it is compact in a vocabulary of any cardinality, which is why it is called ``fully compact''.  It has also a nice complete axiomatization. However, the logic  $\lcof$ does not have interpolation or Beth properties (\cite{MR625528}). Still, interestingly, there is a countably compact logic $\laa$ with $\craig(\lcof,\laa)$, see Theorem~\ref{cof-aa}.


\section{Infinitary logics}\label{Infinitary logics}
 
Infinitary logics, such as $\looo$, are based on a completely different idea than generalized quantifiers. Here we add new logical operations\footnote{In $L_{\kappa\lambda}$ conjunctions and disjunctions of sets of formulas of size $<\kappa$ are allowed as well as quatification of sequences of variables of length $<\lambda$. Then $L_{\infty\lambda}$ is $\bigcup_\kappa L_{\kappa\lambda}$ and $L_{\infty\infty}$ is $\bigcup_\lambda L_{\infty\lambda}$.} which resemble the familiar logical operations of $\loo$ but are ``infinitary''.  While in the case of generalized quantifiers we did not mind if the meaning of formulas became ``infinitary'', as in ``there are infinitely many'' and ``there are uncountably many'', here we do not mind if the formulas themselves become syntactically ``infinitary'', as in $\phi_0\vee\phi_1\vee\ldots$. The advantage of this change of perspective is that we obtain lots of examples of extensions of first order logic with interpolation. Since we lose compactness (apart from  so-called Barwise compactness), we cannot obtain the Robinson property, the model theoretic version of interpolation. The first proofs of interpolation for infinitary logics were indeed proof-theoretic.

  
A paradigmatic example of an $\looo$-sentence is $\forall x\bigvee_n (x=s^n(0))$ which says of a unary function $s$ and a constant $0$ that every element is either $0$ or obtained from $0$ by iterating the function $s$, taking $s^0(0)$ to be $0$.
 
We embrace now fully the many-sorted approach to logic as it yields a particularly powerful form of interpolation. Thus we have variables $x^s$ of different sorts $s$. Let $\sort(\tau)$ denote the set of sorts of the vocabulary $\tau$. 
  
Let us recall the auxiliary concept of  a Hintikka set: Suppose $\tau$ is countable,
$C^s$ is a countable set of new constant symbols for $s\in\sort(\tau)$, and $\tau'=\tau\cup C^*$, where $C^*=\bigcup_sC^s$. A 
{\em {Hintikka set}} is any set $H$ of 
 $\tau'$-sentence of $\looo$, which satisfies:
 
\begin{enumerate}
\item $t=t\in H$ for every constant $\tau'$-term $t$. 

\item If $\phi(t)\in H$,  $\phi(t)$ atomic,  and
$t=t'\in H$, then $\phi(t')\in H$.

\item {If $\neg\phi\in H$, then 
$\phi\neg\in H$.\footnote{
$\phi\neg $ is $\neg\phi,\mbox{ if $\phi$ is atomic}$,
$(\neg\phi)\neg $ is $\phi$,
$(\bigwedge_n\phi_n)\neg  $ is $\bigvee_n\neg\phi_n$,
$(\bigvee_n\phi_n)\neg  $ is $\bigwedge_n\neg\phi_n$,
$(\forall x^{s}\phi)\neg  $ is $\exists x^{s}\neg\phi$, and
$(\exists x^{s}\phi)\neg  $ is $\forall x^{s}\neg\phi$
}}

\item {If $\bigvee_n\phi_n\in H$, then $\phi_n\in H$ for some $n$.}

\item {If $\bigwedge_n\phi_n\in H$, then $\phi_n\in H$ for all $n$.
}

\item {If $\exists x^{s}\phi(x^{s})\in H$, then
$\phi(c)\in H$ for some $c\in C^s$.}

\item If $\forall x^{s}\phi(x^{s})\in H$, then $\phi(c)\in H$
for all $c\in C^s$.

\item { For every constant $\tau'$-term $t$ of sort $s$ there is  $c\in C^s$  such that $t=c\in
H$.}

\item {There is no atomic  $\phi$  such that
$\phi\in H$ and $\neg\phi\in H$.}
\end{enumerate}
The Hintikka set $H$ is a Hintikka set {\em {for}} a sentence $\phi$ of
$\looo$ (or $\loo$) if $\phi\in H$.
The basic property of Hintikka sets is that if $\phi\in\looo$ does have  a model $\mm$, then there is a Hintikka set for $\phi$, built directly from formulas true in $\mm$; and conversely, if
there is a Hintikka set $H$  for a given $\phi\in\looo$, then $\phi$ has a model, as we shall now see. A model $\mm$ is built from $H$ as follows: Define on $C^*=\bigcup_s C^s$ an equivalence relation $c\sim c'$ by the condition $c=c'\in H$. Let 
the domain of sort $s$ in $\mm$ be $M_s=\{[c]:c\in C^s\}$. The interpretations in $\mm$ are defined by  
 $c^\mm=[c]$,
$f^\mm([c_{i_1}],\ldots,[c_{i_n}])=[c]$  for  $c\in C^*$
such that $f(c_{i_1},\ldots, c_{i_n})=c\in H$. 
 For any constant
term $t$ of sort $s$ there is a $c\in C^s$ such that $t=c\in H$. It is
easy to see that {$t^\mm=[c]$}. 
 If $R$ is an $n$-ary predicate symbol, we let $(t_1,\ldots,t_n)\in R^\mm$ if and only if $R(t_1,\ldots,t_n)\in H$. 
 By {induction} on $\phi(x_1,\ldots,x_n)$ one can now easily prove that if  $d_1\ldots,d_n\in C^*$ then, 
 $$\begin{array}{lcl}
\phi(d_1,\ldots,d_n)\in H&\Rightarrow& \mm\models\phi(d_1,\ldots,d_n)\\
\neg\phi(d_1,\ldots,d_n)\in H&\Rightarrow&\mm\not\models\phi(d_1,\ldots,d_n). 
\end{array}$$
In
particular, $\mm\models \phi$ for the $\phi$ we started with, since $\phi\in H$.

How do we {find} useful Hintikka sets?  The basic tool is the auxiliary concept of a {consistency property}.
Roughly speaking, a consistency property is a set  $\Delta$ of (usually) finite sets $S$ which are consistent and  $\Delta$ has information about how to extend $S$ to  a Hintikka set. 

 A {\em {consistency property}} is any set
$\D$ of countable sets $S$ of $\tau$-formulas of $\looo$, which
satisfies the conditions:

\begin{enumerate}
\item {If $S\in\D$, then $S\cup\{t=t\}\in \D$ for every
constant $\tau'$-term $t$.
}
\item {If $\phi(t)\in S\in\D$,  $\phi(t)$ atomic,  and
$t=t'\in S$, then $S\cup\{\phi(t')\}\in \D$.}

\item {If $\neg\phi\in S\in\D$, then
$S\cup\{\phi\neg\}\in \D$.}

\item {If $\bigvee_n\phi_n\in S\in\D$, then
$S\cup\{\phi_n\}\in \D$ for {some} $n$.}

\item {If $\bigwedge_n\phi_n\in S\in\D$, then
$S\cup\{\phi_n\}\in \D$ for {all} $n$.}

\item {If $\exists x^{s}\phi(x^{s})\in S\in\D$, then
$S\cup\{\phi(c)\}\in \D$ for {some} $c\in C^s$.}

\item {If $\forall x^{s}\phi(x^{s})\in S\in\D$, then
$S\cup\{\phi(c)\}\in\D$ for {all} $c\in C^s$.}

\item {For every constant $L'$-term $t$ of sort $s$ there is $c\in C^s$ such that
$S\cup\{t=c\}\in \D$.}

\item {There is no atomic formula $\phi$ such that $\phi\in
S$ and $\neg\phi\in S$.}
\end{enumerate}

The consistency property $\D$ is a consistency property {\em for}
a set $T$ of infinitary $\tau$-sentences if  for all $S\in\D$ and
all $\phi\in T$ we have $S\cup\{\phi\}\in \D$.

A $\tau$-{\em {fragment}} of $\looo$ is any
  set $\mathcal{F}$ of formulas of $\looo$ in the vocabulary $\tau$ such that
 $\mathcal{F}$ is closed under substitutions of terms,
$\mathcal{F}$ contains the atomic $\tau$-formulas,
 $\neg \varphi \in \mathcal{F}$ if and only if $\varphi \in \mathcal{F}$,
 $\wedge \Phi \in \mathcal{F}$ if $\Phi \subseteq \mathcal{F}$ is finite,
  $\vee \Phi \in \mathcal{F}$  if $\Phi \subseteq \mathcal{F}$, is finite, 
 $\wedge\Phi \in\mathcal{F}$ if and only if  $\vee \Phi \in \mathcal{F}$, and then $\Phi\subseteq\mathcal{F}$,
 $\forall x^{s} \varphi \in \mathcal{F}$ if and only if $\varphi \in \mathcal{F}$,
   $\exists x^{s} \varphi \in \mathcal{F}$ if and only if $\varphi \in \mathcal{F}$.
Note that a fragment is necessarily closed under subformulas. 
It is easy to see that if $\varphi \in \looo$ with a countable vocabulary $\tau$, then there
  is a countable fragment $\mathcal{F}\subseteq \looo$ such that $\varphi \in \mathcal{F}$.

\begin{lemma}\label{hintikka set lemma}
Let $T$ be a countable set of  $\tau$-sentences of $\looo$ or $\Loo$ and $\mathcal{F}\subseteq \looo$  a countable fragment such that $T\subseteq \mathcal{F}$.
 Suppose $\D$ is a consistency property for $T$.
Then for any $S\in \D$ there
is a Hintikka set $H$ for $T$ such that $S\subseteq H$.

\end{lemma}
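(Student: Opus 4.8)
The plan is to run the standard model-existence construction. Starting from $S$, I would build an increasing chain $S=S_0\subseteq S_1\subseteq S_2\subseteq\cdots$ of members of $\D$, where at stage $n$ one clause of the definition of a consistency property is applied (or one sentence of $T$ adjoined), the stages being organised by a bookkeeping list so that every closure condition (1)--(9) in the definition of a Hintikka set is eventually realised and every sentence of $T$ is eventually included; then $H=\bigcup_{n<\omega}S_n$ will be the required Hintikka set for $T$ with $S\subseteq H$.

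First I would set up the countability bookkeeping. Since $\tau$ is countable and each $C^s$ is countable, the set $\mathcal{F}^*$ of $\tau'$-sentences obtained from the countable fragment $\mathcal{F}$ by substituting constants of $C^*$ for variables, together with $T$ and all sentences of the form $t=t$ and $t=c$, is a \emph{fixed} countable set; and because $\mathcal{F}$ is closed under subformulas and under substitution of terms, while clauses (1)--(8) only ever introduce such formulas, every sentence that can occur in any $S_n$ lies in $\mathcal{F}^*$. I would then form a list $\rho_0,\rho_1,\rho_2,\ldots$, in which each entry occurs infinitely often, of all possible \emph{tasks}: for each clause (1)--(8) of the definition of a consistency property, every instance of its triggering data --- a disjunction $\bigvee_n\phi_n$ for (4), a conjunction $\bigwedge_n\phi_n$ with an index $m$ for (5), a sentence $\exists x^s\phi$ for (6), a sentence $\forall x^s\phi$ with a constant $c\in C^s$ for (7), a constant $\tau'$-term $t$ for (1) and (8), a pair $(\phi(t),\,t=t')$ with $\phi(t)$ atomic for (2), a sentence $\neg\phi$ for (3) --- plus one task ``adjoin $\psi$'' for each $\psi\in T$. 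As $\mathcal{F}^*$ and $C^*$ are countable, there are only countably many such tasks.

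Next comes the recursion. Put $S_0=S$. Given $S_n\in\D$, inspect $\rho_n$. If $\rho_n$ is ``adjoin $\psi$'' for $\psi\in T$, put $S_{n+1}=S_n\cup\{\psi\}\in\D$, using that $\D$ is a consistency property \emph{for} $T$. If $\rho_n$ is a clause-$(k)$ task whose triggering formula or term is already in $S_n$, apply clause $(k)$ of the definition of a consistency property to $S_n$ to obtain some $S_{n+1}\supseteq S_n$ in $\D$ witnessing that clause --- adding a suitable disjunct $\phi_m$ for (4), the conjunct $\phi_m$ for (5), an instance $\phi(c)$ for (6) and (7), $t=t$ for (1), $t=c$ for (8), $\phi(t')$ for (2), $\phi\neg$ for (3); otherwise set $S_{n+1}=S_n$. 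By induction each $S_n\in\D$ and the chain increases, so $H:=\bigcup_{n<\omega}S_n$ is well-defined, $S\subseteq H$, and $H\subseteq\mathcal{F}^*$ consists of $\tau'$-sentences.

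Finally I would verify that $H$ is a Hintikka set for $T$. Each $\psi\in T$ is adjoined at the first stage $n$ with $\rho_n=$ ``adjoin $\psi$'', so $T\subseteq H$. For each of (1)--(8): if its hypothesis holds of $H$, the triggering formula or term already lies in some $S_k$; the corresponding task occurs at some stage $n\ge k$, where it is executed (the triggering data being still present, by monotonicity of the chain), so its conclusion enters $S_{n+1}\subseteq H$ --- the ``for all''-type clauses (5) and (7), and the clauses (1), (8) quantifying over all terms, are covered because every index $m$, every $c\in C^s$, and every term $t$ was listed as a separate task. For (9): if some atomic $\phi$ had $\phi,\neg\phi\in H$, both would lie in a common $S_n$, contradicting $S_n\in\D$ together with clause (9) for $\D$. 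The construction is entirely routine; its only substantive point is the bookkeeping, and the one thing that might seem to threaten it --- that clauses (6)--(8) keep feeding new formulas into $H$ that themselves demand processing --- is neutralised by the observation that all such formulas lie in the single countable set $\mathcal{F}^*$ fixed at the outset, which is exactly what makes the countable-fragment hypothesis essential. Note too that $H$ itself need not belong to $\D$; only the finite-stage $S_n$ do, so no closure of $\D$ under unions of chains is needed.
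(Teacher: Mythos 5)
Your proposal is correct and follows essentially the same route as the paper, which defines $H$ as the union of an increasing chain $S_0=S\subseteq S_1\subseteq\cdots$ in $\D$ built with ``judicious bookkeeping''; you have simply spelled out the bookkeeping (the countable task list drawn from the fragment's substitution instances, the infinitely-repeated scheduling, and the verification of clauses (1)--(9)) that the paper leaves implicit.
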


{\bf Proof:} 
%
 We can define $H$ as the union of an increasing sequence $S_n$, $n<\omega$, where
 $S_0=S$. The sequence is constructed straightforwardly in such a  way, maintaining  judicious bookkeeping, that in the end the set $H$ is a Hintikka set. \qed

 Let $\un(\phi)$ be all sorts $s$ such that a variable of sort $s$ occurs universally quantified in $\phi$. Similarly $\un(S)$ for a set $S$ of formulas.
 Let $\ex(\phi)$ be all sorts $s$ such that a variable of sort $s$ occurs existentially quantified in $\phi$. Similarly $\ex(S)$ for a set $S$ of formulas.
For example,  suppose $\phi$ is
 $\forall x^{1}\exists x^{0}(x^0=x^1)$. Then $\un(\phi)=\{1\}$ and $\ex(\phi)=\{0\}$.
 Suppose $\phi$ is $\forall x^{0}\forall y^{3}(R(x^{0},y^{3})\leftrightarrow R'(x^{0},y^{3}))$. Then $\un(\phi)=\{0,3\}$ and $\ex(\phi)=\emptyset$.
 
\begin{theorem}[\cite{MR235996}]\label{craig1}
Suppose $\phi\models\psi$, where $\phi$ and $\psi$  are sentences of $\looo$ in a relational vocabulary. Then there is a
sentence $\theta$ of $\looo$ such that
\begin{enumerate}
\item $\phi\models\theta$ 
and $\theta\models\psi$
\item $\tau(\theta)\subseteq\tau(\phi)\cap\tau(\psi)$
\item $\un(\theta)\subseteq \un(\phi)$
and $\ex(\theta)\subseteq  \ex(\psi)$.
 
\end{enumerate}
\end{theorem}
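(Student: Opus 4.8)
The plan is to adapt the consistency‑property / Hintikka‑set construction behind Lemma~\ref{hintikka set lemma} to a \emph{two‑sided} form, carrying along enough sort bookkeeping to control $\un$ and $\ex$ of the interpolant. Write $\tau_0=\tau(\phi)\cap\tau(\psi)$, put $\neg\psi$ in negation normal form (so that $\un(\neg\psi)=\ex(\psi)$ and, if $s\notin\ex(\psi)$, then $\neg\psi$ has no universally quantified variable of sort $s$ at all), and suppose for contradiction that $\phi\models\psi$ but that no $\theta$ as in the statement exists; the aim is to build a model of $\phi\wedge\neg\psi$. Fix countable fragments $\mathcal F_1\ni\phi$ and $\mathcal F_2\ni\neg\psi$ of $\looo$ over $\tau(\phi)$ and $\tau(\psi)$. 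The key device is an \emph{asymmetric} supply of witness constants: for each sort $s$, a witness of sort $s$ produced while processing the $\phi$‑side is declared \emph{shared} precisely when $s\in\ex(\psi)$, and one produced while processing the $\neg\psi$‑side is declared \emph{shared} precisely when $s\in\un(\phi)$; all other witnesses stay \emph{private} to the side that made them, and private constants are never placed in $\tau_0$, hence never occur in a candidate interpolant.

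Call a pair $(S_1,S_2)$ of countable sentence sets, $S_1$ over $\tau(\phi)$ with its constants and $S_2$ over $\tau(\psi)$ with its constants, \emph{inseparable} if there is no $\theta\in\looo$ with $\tau(\theta)\subseteq\tau_0$, $\un(\theta)\subseteq\un(\phi)$ and $\ex(\theta)\subseteq\ex(\psi)$ such that $S_1\models\theta$ and $S_2\cup\{\theta\}$ is inconsistent; by assumption $(\{\phi\},\{\neg\psi\})$ is inseparable. One then checks that inseparability survives each step driving $(S_1,S_2)$ towards a pair of Hintikka sets. For $\bigvee_n\chi_n\in S_1$ some $(S_1\cup\{\chi_n\},S_2)$ is inseparable, since otherwise $\bigvee_n\theta_n$ of the separators is a legitimate $\looo$‑formula (countability) whose vocabulary, $\un$‑ and $\ex$‑sets are the unions of those of the $\theta_n$, and it separates $(S_1,S_2)$; dually $\bigvee_n\chi_n\in S_2$ is handled via $\bigwedge_n\theta_n$, and conjunctions trivially. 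For $\exists x^s\chi\in S_1$ one adds a fresh witness $c$: if $c$ is private then a separator of $(S_1\cup\{\chi(c)\},S_2)$ already separates $(S_1,S_2)$; if $c$ is shared, so $s\in\ex(\psi)$, then from a separator $\theta'(c)$ one passes to $\exists x^s\theta'(x^s)$, which separates $(S_1,S_2)$ with $\ex$‑set contained in $\ex(\theta')\cup\{s\}\subseteq\ex(\psi)$; the mirror case $\exists x^s\chi\in S_2$ produces $\forall x^s\theta'(x^s)$ and uses $s\in\un(\phi)$. Universal instantiations $\forall x^s\chi\in S_i$ are performed only at the constants visible to side $i$ (all shared ones, plus the ones private to side $i$) and preserve inseparability outright, the instance being a $\models$‑consequence. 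Finally, for each atomic $\tau_0$‑sentence $\alpha$ over shared constants one of $(S_1\cup\{\alpha\},S_2\cup\{\alpha\})$ and $(S_1\cup\{\neg\alpha\},S_2\cup\{\neg\alpha\})$ is inseparable — else $(\neg\alpha\vee\theta)\wedge(\alpha\vee\theta')$ separates $(S_1,S_2)$ — and this is what will force the two Hintikka sets to agree on $\tau_0$. The equality and renaming clauses are routine and introduce no quantifiers.

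Iterating these steps in a fair order produces increasing chains whose unions $H_1\supseteq\{\phi\}$ and $H_2\supseteq\{\neg\psi\}$ are Hintikka sets agreeing on every atomic $\tau_0$‑sentence over shared constants; the term model $\mm$ built from $H_1$ as after Lemma~\ref{hintikka set lemma} then also witnesses, on its $\tau(\psi)$‑reduct, a model of $H_2$, so $\mm\models\phi\wedge\neg\psi$, contradicting $\phi\models\psi$. I expect the real obstacle to be exactly this final gluing together with the design choice that enables it: one must verify that the deliberately unbalanced constant pools are still rich enough for the two term models to coincide — the point being that if $s\notin\un(\phi)$ then $\phi$ has no universally quantified variable of sort $s$, so $H_1$ imposes no universal demand on sort $s$ and tolerates the extra sort‑$s$ elements donated by the $\neg\psi$‑side's private witnesses, and symmetrically using $\ex(\psi)=\un(\neg\psi)$ for $H_2$; checking that no step ever requires a shared witness of a sort outside these two sets is precisely what keeps every separator, and hence the interpolant $\theta$ finally extracted by compactness‑free countable conjunction, within the prescribed $\un$/$\ex$ bounds. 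A slicker alternative would be to treat each sort as a unary ``sort predicate'', so that $\forall x^s$ and $\exists x^s$ become a negative and a positive occurrence of it, and then deduce the theorem from a Lyndon‑style sharpening of interpolation for $\looo$; the work then relocates entirely to establishing that sharpening.
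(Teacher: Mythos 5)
Your argument is correct and is essentially the paper's own proof: both run the Maehara-style consistency-property/separability argument for $\looo$, assuming no interpolant exists, showing inseparability is preserved under each Hintikka-set-forming step (with the $\un$/$\ex$ bookkeeping discharged exactly at the existential-witness steps by quantifying the fresh constant away), and extracting a model of $\phi\wedge\neg\psi$ for the contradiction. The differences are only in implementation: the paper builds a single consistency property for $\{\phi,\neg\psi\}$ whose members split as $S_1\cup S_2$ over one common pool of constants, absorbing the constant bookkeeping into the modified sets $\un'(\theta),\ex'(\theta)$ (which include the sorts of witness constants occurring in $\theta$), whereas you keep a pair $(S_1,S_2)$ with a private/shared partition of witnesses and glue two Hintikka sets at the end.
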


Proof:  Let
us assume that the claim of the theorem is false and derive a
contradiction. Since $\phi\models\psi$, the set
 $\{\phi,\neg\psi\}$ has no models.
We construct a
consistency property for $\{\phi,\neg\psi\}$. It then follows that $\{\phi,\neg\psi\}$ has a model, which is a contradiction.
  Let $\tau_1=\tau(\phi)$, $\tau_2=\tau(\psi)$, and $\tau=\tau_1\cap \tau_2$.  Suppose $C^s=\{c^s_n:n\in\oN\}$ is a set of new
constant symbols for each sort $s$ of $\tau_1\cup \tau_2$. Let $C^*=\bigcup_sC^s$.
%
 Given a set $S$ of sentences, let {$S_1$} consists of all $\tau_1\cup C^*$-sentences in
$S$ with only finitely many constant from $C^*$, and let {$S_2$} consists of all $\tau_2\cup C^*$-sentences in $S$ with only finitely many constant from $C^*$. 
 Let
us say that $\theta$ \emph{separates} $S'$
and $S''$ if 
\begin{enumerate}
 
\item $S'\models\theta$, 
\item $S''\models\neg\theta$,  
\item $\un'(\theta)\subseteq \un(S')$, 
\item $\ex'(\theta)\subseteq\un(S'')$, 
\end{enumerate}where $\un'(\theta)$ consists of sorts $s\in\un(\theta)$ and sorts of constants $c\in C^*$ occurring in $\theta$, and $\ex'(\theta)$ consists of sorts $s\in\ex(\theta)$ and sorts of constants $c\in C^*$ occurring in $\theta$.

 Let $\D$ consist of finite sets $S$ of sentences of $\looo$ such that {$S=S_1\cup S_2$} and:
\begin{itemize}

\item[($\star$)] There is no $L\cup C^*$-sentence
that separates $S_1$ and $S_2$.
\end{itemize}
Note that $\{\phi,\neg\psi\}\in\D$. We show that $\D$ is a consistency property. This involves checking  the  
nine conditions that a consistency property has to satisfy. Most are almost trivial. For example, let us look at condition 7.
Consider $S\in\Delta$ and {$\exists x^{s}\phi(x^{s})\in S_1$}. Let $c_0\in C^s$ be such that
$c_0$ does not occur in $S$. Now the sets
$S_1\cup\{\phi(c_0)\}$ and $S_2$ satisfy ($\star$). 
%
%
On the other hand, consider $S\in\Delta$ and {$\exists x^{s}\phi(x^{s})\in S_2$}. Let again $c_0\in C^s$ be such that
$c_0$ does not occur in $S$. Now the sets
$S_1$ and $S_2\cup\{\phi(c_0)\}$ satisfy ($\star$). 
Theorem~\ref{craig1} is proved. \qed

If above $\phi,\psi\in A$, where $A$ is a countable admissible set, then also $\theta\in A$ \cite{MR424560}.

Many sorted  interpolation gives numerous {preservation} results of which we mention just one. It should be noted that we are dealing with infinitary logic, so we do not have a compactness theorem. Also, for example, it is not true that every formula has a prenex normal form (\cite{MR539973}). On the other hand,  Theorem~\ref{craig1}  gives also  many-sorted interpolation for first order logic, so it can be used to prove preservation results for first order logic as well.

Here is an example:

\begin{theorem}[\cite{MR245437}]
A formula $\phi$ of $\looo$ is preserved by submodels if and only if it is logically equivalent to a universal formula. 
\end{theorem}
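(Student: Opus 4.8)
The statement is the \L o\'s--Tarski preservation theorem for $\looo$: a sentence $\phi$ of $\looo$ is preserved under submodels iff it is logically equivalent to a universal ($\forall^*$) sentence. One direction is trivial: if $\phi$ is equivalent to a universal sentence, then since universal sentences are manifestly preserved under submodels (restricting a model to a substructure can only make a $\forall$ easier to satisfy, and atomic formulas are absolute between a model and its substructures in a relational vocabulary), $\phi$ is preserved under submodels. The content is the converse, and the plan is to derive it from many-sorted interpolation, Theorem~\ref{craig1}, in the now-standard way.

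Assume $\phi$ is preserved under submodels. The first step is to set up the two-sorted apparatus that ``witnesses'' being a submodel. Introduce a second sort and a new unary predicate $U$ picking out a substructure; more precisely, work with a vocabulary $\tau'$ that is a disjoint copy of $\tau=\tau(\phi)$ together with an inclusion device, and let $\phi'$ be the sentence asserting that the $U$-part (closed under the relevant structure, which in a relational vocabulary is automatic) together with the relativized interpretations of the $\tau$-symbols forms a substructure isomorphic to a $\tau'$-structure satisfying $\phi$. Concretely one wants: $\phi \wedge (\text{"}A' \text{ is a submodel of } A\text{"}) \models \phi'$, where $\phi'$ says the submodel part satisfies $\phi$. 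Wait --- the direction we need for preservation is the opposite. Let me restate: set $\phi^{\mathrm{sub}}$ to be the sentence (in the copy vocabulary, with a relativizing predicate) saying ``the submodel satisfies $\phi$''. Preservation under submodels says precisely that whenever the ambient model satisfies $\phi$ and the submodel predicate is interpreted sensibly, then $\phi^{\mathrm{sub}}$ holds --- but that's vacuous. The correct encoding: we want to show $\phi$ implies an interpolant that mentions only atomic/universal material. So we consider the entailment $\phi \models \psi$ where $\psi$ is a ``universal closure'' statement, and we must manufacture the right $\psi$.

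The standard move: let $\Phi^\forall$ be the set of universal consequences of $\phi$ (all universal $\looo$-sentences $\sigma$ with $\phi\models\sigma$). It suffices to show $\Phi^\forall \models \phi$, for then --- since $\looo$ has no compactness, one cannot immediately conclude $\phi$ is equivalent to a \emph{single} universal sentence, so here one must be more careful and instead run the interpolation argument directly to produce the single $\theta$. The cleaner route, and the one I would carry out, is: form a two-sorted sentence $\chi$ in vocabulary $\tau_1 \dot\cup \tau_2$ (two disjoint copies of $\tau$, linked by the assertion that sort-2 universe embeds as a substructure into sort-1 universe and the $\tau_2$-relations are the restrictions of the $\tau_1$-relations) such that $\chi \wedge \phi_1 \models \phi_2$, where $\phi_i$ is $\phi$ read in the $i$-th copy; this is exactly the hypothesis that $\phi$ is preserved under submodels. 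Now crucially: in $\chi\wedge\phi_1$ the sort-2 variables are never \emph{universally} bound (the submodel-assertion only \emph{existentially} produces sort-2 elements and relativizes, so sort 2 occurs only existentially there after a suitable rewriting), while $\phi_2$ may use sort 2 however it likes. Apply Theorem~\ref{craig1} to get $\theta$ with $\chi\wedge\phi_1\models\theta\models\phi_2$, with $\tau(\theta)\subseteq \tau_2$ and, by clause (3), $\un(\theta)\subseteq\un(\chi\wedge\phi_1)$ which excludes sort 2 --- hence no universal quantifier over sort-2 variables occurs in $\theta$. Relabelling sort 2 back to the single sort, $\theta$ becomes a $\tau$-sentence with no universal quantifiers over the main sort and no genuinely two-sorted content; one then checks $\theta$ is (equivalent to) an existential sentence, and that $\phi\models\theta$ and $\theta\models\phi$, so $\phi$ is equivalent to an existential sentence --- and dually (running the same argument on $\neg\phi$, preserved under \emph{extensions}) one gets the universal normal form. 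Actually the symmetric bookkeeping has to be arranged so that it is the universal quantifier rank that ends up controlled; I would set it up so that the roles of $\un$ and $\ex$ in clause (3) deliver exactly ``$\theta$ has no existential quantifiers,'' i.e. $\theta$ is universal.

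\textbf{Main obstacle.} The genuinely delicate step is the syntactic encoding of ``$B$ is a substructure of $A$'' as a sentence $\chi$ in a two-sorted $\looo$ vocabulary in which the sort of $B$ occurs \emph{only existentially (or only universally)} --- so that clause (3) of Theorem~\ref{craig1} can be invoked to strip the unwanted quantifiers from the interpolant. One must verify: (a) that the relational-vocabulary hypothesis is used precisely to make atomic formulas absolute between a structure and its substructures (so no function-closure clauses, which would introduce $\forall$'s over the wrong sort, are needed), and (b) that after pulling the interpolant back to one sort and collapsing the encoding, the resulting $\theta$ really is logically equivalent to $\phi$ over \emph{all} $\tau$-structures, not just those arising in the two-sorted picture --- this requires that every $\tau$-structure embeds into itself (trivial) and that the encoding faithfully reflects satisfaction. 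Everything else (the two trivial directions, the manipulation of $\un$ and $\ex$, the final translation from ``no universal sort-quantifier'' to ``universal sentence'') is routine bookkeeping of the kind the paper elsewhere leaves to the reader.
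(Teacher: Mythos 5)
Your overall strategy is the paper's: encode ``is a submodel of'' two-sortedly and invoke Theorem~\ref{craig1}, using its clause (3) to control the quantifiers of the interpolant. But the one concrete arrangement you propose does not work, and the difficulty you yourself flag as the ``main obstacle'' is real and is resolved in the wrong direction. The assertion ``every element of the submodel sort is an element of the ambient sort'' is unavoidably of the form $\forall x^{s}\exists x^{a}(x^{a}=x^{s})$, where $s$ is the submodel sort: the submodel sort is \emph{universally} quantified there (and again in the clause saying the relations agree), not existentially, and no rewriting removes this. Consequently, in your entailment $\chi\wedge\phi_1\models\phi_2$, where the interpolant $\theta$ must live in the submodel vocabulary, clause (3) gives $\un(\theta)\subseteq\un(\chi\wedge\phi_1)$, a set which \emph{contains} the submodel sort, and $\ex(\theta)\subseteq\ex(\phi_2)$, which in general also contains it; neither inclusion constrains $\theta$ at all. (Indeed, had your arrangement worked as described it would make $\phi$ \emph{existential}, which is the wrong normal form: $\forall x\,P(x)$ is preserved by submodels but not existential.)

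The fix, which is what the paper does, is to interpolate the \emph{contrapositive}. Write $\phi$ in the ambient sort $0$, its copy $\phi'$ in the submodel sort $1$, and let $\ext$ be $\forall x^{1}\exists x^{0}(x^{0}=x^{1})$ together with the agreement of the relations on sort $1$. Preservation under submodels is exactly $\ext\wedge\neg\phi'\models\neg\phi$. Now the common sort is the \emph{ambient} sort $0$, which occurs only existentially on the left-hand side, so the interpolant $\theta$ (a sort-$0$ sentence) has $\un(\theta)=\emptyset$, i.e.\ it is existential. Since every model is a submodel of itself, $\neg\phi\equiv\theta$, whence $\phi\equiv\neg\theta$ is universal --- in one pass, with no dual argument on $\neg\phi$ needed. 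Your remarks on the easy direction and on the role of the relational-vocabulary hypothesis (absoluteness of atomic formulas for substructures) are correct and match the paper.
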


\begin{proof} Let us assume the single sorted $\phi$ is written in sort $0$ variables and has just one binary predicate symbol $R$. Let $\phi'$ be the same written in sort $1$ variables and with $R$ replaced by $R'$. Let $\ext$ be the conjunction 
\begin{equation}\label{RR}
\forall x^{1}\exists x^{0}(x^0=x^1)\wedge\forall x^{1}\forall y^{1}(R'(x^{1},y^{1})\leftrightarrow R(x^{1},y^{1})).
\end{equation}
Note that $(\{M_0,M_1\},R,R')\models\forall x^{1}\exists x^{0}(x^0=x^1)$ iff $M_1\subseteq M_0$. The sentence (\ref{RR}) is true in $(\{M_0,M_1\},R,R')$ iff $(M_1,R')\subseteq(M_0,R)$. 
 By assumption, 
$\ext\wedge \neg\phi'\models\neg\phi$. Let $\theta\in\looo$  be given by  Theorem~\ref{craig1}. Thus
$\ext\wedge\neg\phi'\models\theta$ and  $\theta\models\neg\phi$.
The only common sort is $0$, so $\theta$ has only sort $0$ symbols. To see that $\theta$ is existential we note that if a sort $0$ variable was universally quantified in $\theta$, then it is universally quantified in $\ext\wedge\neg\phi'$, but there is no universally quantified sort $0$ variable in $\ext\wedge\neg\phi'$. Thus $\theta$ is existential. Moreover, $\models\neg\phi\leftrightarrow\theta$.
  \end{proof}
 

The infinitary logic $\looo$ can be extended by new logical operations preserving its ``good'' properties such as interpolation: Let us call any function $\P(\omega) \to 2$ a \emph{propositional connective}. If $P$ is a propositional connective, then $\looo(P)$ is the extension of $\looo$ by the connective
$$\mm\models_P C(\langle\phi_i:i<\omega\rangle)(\vec{a})\iff P(\{i:\mm\models_P\phi_i(\vec{a})\})=1.$$
Harrington \cite{MR603330} proved that there are $2^{2^\omega}$ propositional connectives $P$ such that $\looo(P)$ satisfies the Craig Interpolation Theorem as well as a form of the Barwise Compactness Theorem. Unfortunately Harrington's new propositional connectives are just abstract functions with no intuitive or natural meaning. Still their existence demonstrates that  $\looo$ is by no means maximal with respect to the Craig Interpolation Theorem, even if a weak form of compactness is added.

The situation changes radically when we move to $L_{\omega_2\omega}$:
  
\begin{theorem}[\cite{MR290943}]
$\craig(L_{\omega_2\omega},\lio)$ fails.
\end{theorem}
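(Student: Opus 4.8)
The plan is to show that $\craig(L_{\omega_2\omega},\lio)$ fails by exhibiting a pair of sentences $\phi,\psi\in L_{\omega_2\omega}$ with $\phi\models\psi$ for which no interpolant can exist even in the much larger logic $\lio$. The natural candidate is a construction where $\phi$ forces some structure of size $\aleph_1$ in its common-vocabulary reduct and $\psi$ forbids such structure on the other side, so that a hypothetical $\lio$-interpolant $\theta$ in the common vocabulary would have to define a class closed under $L_{\infty\omega}$-equivalence that separates, say, a linear order of type $\omega_1$ from one of type $\omega_1 + \omega_1^*$ (or, more robustly, that distinguishes well-orderings of length $\omega_1$ among linear orders). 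The key point to exploit is that $L_{\omega_2\omega}$ can, via a conjunction of length $\aleph_1$, say ``$<$ is a well-order of order type exactly $\omega_1$'' — this is precisely the expressive jump from $\looo$ to $L_{\omega_2\omega}$ — whereas no $\lio$ sentence in the vocabulary $\{<\}$ can pin down well-foundedness at length $\omega_1$, because one can build linear orders $\ma,\mb$ with $\ma$ well-ordered of type $\omega_1$ and $\mb$ ill-founded, yet $\ma\equiv_{\lio}\mb$, using an Ehrenfeucht--Fra\"iss\'e argument of length $\omega$ (a back-and-forth system suffices since $\lio$-equivalence is captured by the infinite EF game with finitely many pebbles per round).

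Concretely, I would let $\tau = \{<\}$, let $\tau_1 = \tau \cup \{S, F, \dots\}$ carry the ``witnessing'' apparatus for one side and $\tau_2 = \tau \cup \{S', F', \dots\}$ for the other, arranged so $\tau_1\cap\tau_2 = \tau$. I would take $\phi$ to be an $L_{\omega_2\omega}$-sentence asserting that its $\tau$-reduct is a linear order and that the auxiliary relations code an order-isomorphism of $<$ onto a canonical copy of $\omega_1$ (which, crucially, $L_{\omega_2\omega}$ can describe internally — one can write down a sentence with conjunctions over $\omega_1$ saying ``$<$ has a first element, every element has an immediate successor, every nonempty bounded-above set $\dots$'', or more simply code a surjection from an internally-defined $\omega_1$); and $\psi$ to be a sentence asserting that its $\tau$-reduct has an infinite descending sequence, i.e. is ill-founded. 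Then $\phi\models\psi$ holds vacuously if the two are jointly unsatisfiable with a shared $\tau$-reduct — so more carefully I would follow the Keisler-style pattern of Theorem~\ref{17}: arrange $\phi$ and $\psi$ so that $\phi\models\psi$ genuinely holds (every $\tau$-reduct expandable to a $\phi$-model is automatically expandable to a $\psi$-model for trivial reasons, e.g. because $\psi$ has the form ``if [something impossible under $\phi$'s reduct] then [anything]''), while a common-vocabulary interpolant would have to be an $\lio$-sentence over $\{<\}$ defining exactly the well-founded linear orders of length $\le\omega_1$, or separating two specific $\lio$-equivalent orders.

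The main obstacle — and the step I would spend the most care on — is the undefinability lemma: producing a well-ordered linear order $\ma$ of type $\omega_1$ and an ill-founded linear order $\mb$ with $\ma \equiv_{\lio}\mb$. The standard device is to take $\mb = \omega_1 + (\omega^* + \omega)\cdot\mathbb{Q}$ or some similar order built from $\omega_1$ by inserting $\mathbb{Z}$-blocks densely past a point, or to use an $\aleph_1$-like dense shuffle; one then shows player $\II$ wins the length-$\omega$ EF game (with unboundedly many pebbles) between $\ma$ and $\mb$ by always responding inside matching ``local'' configurations — any finite partial play only involves finitely many points, and the relevant first-order-with-infinite-disjunctions type of a finite tuple in a linear order is determined by the finite pattern of which gaps between consecutive pebbled points are ``short'' versus ``long/dense'', which can always be preserved. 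I would need to check that this local-isomorphism back-and-forth genuinely yields $\lio$-equivalence (not merely $\looo$-equivalence), which follows because $\lio$-equivalence of structures is exactly the existence of a nonempty family of partial isomorphisms with the back-and-forth property over all of $\ma,\mb$, and the family of finite partial isomorphisms respecting the short/long gap pattern has this property. Once undefinability is in hand, the failure of interpolation is immediate: the hypothetical $\theta\in\lio$ would satisfy $\ma\models\theta$, $\mb\models\neg\theta$ (via the expandability clauses of $\phi$ and $\psi$), contradicting $\ma\equiv_{\lio}\mb$.

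I would close by remarking that the same construction shows the stronger conclusion that $L_{\omega_2\omega}$ fails Souslin--Kleene interpolation relative to $\lio$, since the argument only uses that the two $\pc{}$-classes in question are disjoint and cannot be separated by an $\lio$-definable class, paralleling the remark following Theorem~\ref{17}.
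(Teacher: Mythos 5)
There is a genuine gap, and it sits exactly where you said you would spend the most care: the ``undefinability lemma'' you rely on is false. There is no well-order $\ma$ of type $\omega_1$ and ill-founded linear order $\mb$ with $\ma\equiv_{\lio}\mb$. By Karp's theorem, $\lio$-equivalence is the same as the existence of a nonempty family $I$ of partial isomorphisms with the back-and-forth property; but if $\mb$ contains an infinite descending chain $b_0>b_1>\cdots$, then starting from any $p_0\in I$ and applying the ``back'' condition repeatedly to $b_0,b_1,\dots$ produces preimages $a_0>a_1>\cdots$ in $\ma$, i.e.\ an infinite descending sequence of countable ordinals --- impossible. So well-foundedness is preserved by partial isomorphism, and your proposed $\mb=\omega_1+(\omega^*+\omega)\cdot\mathbb{Q}$ (or any ill-founded order) is \emph{not} $\lio$-equivalent to $(\omega_1,<)$; the ``short/long gap pattern'' family fails the back condition precisely along such a descending chain. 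What is true is the weaker Lopez-Escobar-style fact that no single $\lio$-sentence defines the class of well-orders, but that is a statement about sentences of bounded rank, not about full $\lio$-equivalence, and your argument needs the latter. Since the whole construction funnels through this lemma, the proof does not go through; the setup of $\phi$ and $\psi$ is also left too vague to check that $\phi\models\psi$ holds non-vacuously.

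The paper's proof avoids the issue entirely by making the common vocabulary \emph{empty}: take $\phi\in L_{\omega_2\omega}$ over $\{<\}$ saying ``$<$ is a linear order of order type $\omega_1$'' and $\psi\in\looo$ over a disjoint symbol $\{<'\}$ saying ``$<'$ is a linear order of order type $\omega$''. Then $\phi\models\neg\psi$ simply because models of $\phi$ have cardinality $\aleph_1$ and models of $\psi$ are countable, so an interpolant $\theta\in\lio$ would have vocabulary $\emptyset$ and would have to distinguish the pure set $(\omega_1)$ from the pure set $(\omega)$. That is impossible, since any two infinite pure sets are partially isomorphic via the family of all finite partial injections --- a trivial back-and-forth argument, in contrast to the false one your route requires. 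If you want to salvage your idea of keeping $\{<\}$ in the common vocabulary, you would have to replace ``$\lio$-equivalent ill-founded order'' by a genuinely correct undefinability statement, which is substantially more delicate; reducing to the empty vocabulary is the standard and far cheaper move.
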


\begin{proof}
Let $\phi\in L_{\omega_2\omega}$ say  ``$<$ is a linear order of order-type $\omega_1$'' \cite{MR200133}. Let $\psi\in L_{\omega_1\omega}$ say ``$<'$ is a total linear order of order-type $\omega$''. Clearly $\phi\models\neg\psi$. Suppose $\theta\in \lio$ is such that $\phi\models\theta$, $\theta\models\neg\psi$ and the vocabulary of $\theta$ is $\emptyset$. Then $(\omega_1,<)\models\phi$, so $(\omega_1)\models\theta$. But $(\omega_1)\equiv_{\infty\omega}(\omega)$, as an easy application of the \ef\ game of length $\omega$ shows. So $(\omega)\models\theta$, a contradiction.
 \end{proof}

 The method of proof of Theorem~\ref{undefty}, i.e. essentially an undefinability of truth argument, can be used to prove:
 
\begin{theorem}[\cite{MR421982}]
\begin{enumerate}
\item $\wbeth(L_{\omega_2\omega}, L_{\omega_2\omega_2})$ fails.
\item $\wbeth(L_{\omega_1\omega_1},L_{\infty\infty})$ fails.
\end{enumerate}\end{theorem}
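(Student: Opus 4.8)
The plan is to repeat, in each of the two cases, the undefinability-of-truth argument behind Theorem~\ref{undefty}. For case (2), say, I would work in the vocabulary $\{E,R\}$ with $E,R$ binary and build a sentence $\phi$ of the weaker logic $L_{\omega_1\omega_1}$ whose models $(A,E,R)$ satisfy: if $(A,E)$ is a model of a fixed finite fragment $T$ of $\mathrm{ZFC}^-$, then $R$ is the satisfaction relation of $L_{\infty\infty}$ for $\{E\}$-formulas-with-parameters that are \emph{genuine} (well-founded) formulas and that happen to be elements of $A$, interpreted over the $\{E\}$-structure $(A,E)$, and $R$ is false off the genuine formulas; while if $(A,E)\not\models T$, then $R=\emptyset$. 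The inductive satisfaction clauses are first order over $(A,E)$ exactly as in Theorem~\ref{undefty} (conjunction/disjunction clauses are $\forall$/$\exists$ over the index set, quantifier clauses assert existence of an extended assignment, etc.), so the only genuinely infinitary ingredient is the ability of $L_{\omega_1\omega_1}$ to say ``$x$ is a well-founded set'', namely $\neg\exists y_0y_1y_2\ldots(y_0Ex\wedge\bigwedge_n y_{n+1}Ey_n)$; this is what lets $\phi$ restrict the support of $R$ and thereby pin $R$ down uniquely from $(A,E)$ (on the genuine formulas $R$ is forced by recursion along the well-founded subformula relation, off them it is forced to be false, and if $(A,E)\not\models T$ it is forced to be $\emptyset$). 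Hence $\phi\wedge\phi'\models R(c_1,c_2)\leftrightarrow R'(c_1,c_2)$, i.e.\ $R$ is implicitly definable, and expandability is immediate since every $\{E\}$-structure carries a unique suitable $R$.

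Now suppose $\wbeth(L_{\omega_1\omega_1},L_{\infty\infty})$ held. Then there is $\eta(x,y)\in L_{\infty\infty}$ of vocabulary $\{E\}$ with $\phi\models\forall xy\,(R(x,y)\leftrightarrow\eta(x,y))$. Choose a regular $\kappa$ so large that $(H_\kappa,\in)\models T$ and (the set) $\eta$ lies in $H_\kappa$; then $(H_\kappa,\in,R)\models\phi$ with $R$ the true $L_{\infty\infty}$-satisfaction relation over $(H_\kappa,\in)$ for the $L_{\infty\infty}$-formulas in $H_\kappa$, so $\eta$ defines that satisfaction relation inside $(H_\kappa,\in)$. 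Putting $\xi(x):=\neg\eta(x,\{(x_0,x)\})$, which is again an element of $H_\kappa$, one gets $(H_\kappa,\in)\models\xi(\xi)\leftrightarrow\neg\xi(\xi)$, the usual Tarskian contradiction.

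Case (1) is handled by the same template with the weaker logic $L_{\omega_2\omega}$ in place of $L_{\omega_1\omega_1}$ and $L_{\omega_2\omega_2}$ in place of $L_{\infty\infty}$. The role played above by ``well-foundedness'' (not available in $L_{\omega_2\omega}$, which has only finite quantifier strings) is taken over by the Hanf--Morley sentence of $L_{\omega_2\omega}$ characterizing $(\omega_1,<)$ up to isomorphism (\cite{MR200133}): one has $\phi$ assert, in addition to a finite fragment $T$ of $\mathrm{ZFC}^-$ including ``$V=H_{\omega_2}$'', that the linear order of the countable ordinals of $(A,E)$ is genuinely $(\omega_1,<)$, and one arranges that under this hypothesis $(A,E)$ correctly sees all $L_{\omega_2\omega_2}$-formulas (which are hereditarily of size $\le\aleph_1$) and computes their satisfaction relation $R$, so that again $R$ is implicitly $L_{\omega_2\omega}$-definable with expandability; the diagonalization against $L_{\omega_2\omega_2}$ then runs at $(H_{\omega_2},\in)$ exactly as before, using that the would-be defining formula $\eta$ and the diagonal $\xi$ are themselves elements of $H_{\omega_2}$.

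The only point requiring real care -- and the reason the weaker logics have to be $L_{\omega_1\omega_1}$ and $L_{\omega_2\omega}$ rather than $L_{\omega_1\omega}$ -- is precisely the middle step: the standardness condition available in the weaker logic must be strong enough both to make $R$ uniquely determined by the reduct and to force that reduct to compute the \emph{target} logic's satisfaction relation correctly. For case (2) this is painless, since $L_{\omega_1\omega_1}$ expresses genuine well-foundedness outright. For case (1) it is the delicate part: one must check that the $L_{\omega_2\omega}$-expressible assertion ``$\omega_1^{(A,E)}$ is the real $\omega_1$'' (together with ``$V=H_{\omega_2}$'') already yields everything needed about the $L_{\omega_2\omega_2}$-formulas. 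Everything else -- first-orderness of the satisfaction clauses over $(A,E)$, the recursion in the ``good'' case, expandability, the diagonal -- is routine and parallels Theorem~\ref{undefty}. The detailed argument is in \cite{MR421982}.
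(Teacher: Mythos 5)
Your proposal is correct and follows exactly the route the paper indicates: it gives no detailed proof of this theorem, stating only that "the method of proof of Theorem~\ref{undefty}, i.e.\ essentially an undefinability of truth argument, can be used," and defers to \cite{MR421982}. Your elaboration --- an implicitly definable internal satisfaction relation for the stronger logic, with expandability secured by the disjunctive (standard vs.\ non-standard) form of $\phi$, the standardness device supplied by well-foundedness in $L_{\omega_1\omega_1}$ and by the $L_{\omega_2\omega}$-characterization of $(\omega_1,<)$ respectively, and the Tarskian diagonal at $(H_\kappa,\in)$ --- is precisely that argument, and your identification of the delicate step (that the weaker logic's standardness condition must force correct computation of the target logic's satisfaction relation) is the right place to put the care.
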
  

However, with an appropriate modification of the method of  the proof of Theorem~\ref{craig1} one can show $\craig(L_{\kappa\omega},L_{(2^{<\kappa})^+\kappa})$, where $\kappa$ is regular (\cite{MR290943}).
By modifying the question of interpolation suitably, one can obtain a more balanced result, see \cite{MR1777793}.
The result $\craig(L_{\kappa\omega},L_{(2^{<\kappa})^+\kappa})$ raises the question, whether or not there is a logic $L$ such that 
$L_{\kappa\omega}\le L\le L_{(2^{<\kappa})^+\kappa}$ and $L$ has interpolation? Shelah's new infinitary logic $L^1_\kappa$ gives one answer to this question (\cite{MR2869022}). 

To see this, 
%
 %
let us redefine $L_{\kappa\omega}$  as $\bigcup_{\lambda<\kappa}L_{\lambda^+\omega}$
and $L_{\kappa\kappa}$ as $\bigcup_{\lambda<\kappa}L_{\lambda^+\lambda^+}$ in the
case that $\kappa$ is a limit cardinal. For regular limits this agrees
with the old notation for $L_{\kappa\omega}$ and $L_{\kappa\kappa}$, but for
singular cardinals the new notation seems more canonical. Let
$\kappa=\beth_\kappa$, where $\beth_0=\omega$, $\beth_{\alpha+1}=2^{\beth_\alpha}$ and $\beth_\nu=\sup\{\beth_\alpha:\alpha<\nu\}$ for limit ordinals $\nu$. The new infinitary logic $\lkp$ introduced in
\cite{MR2869022} satisfies $L_{\kappa\omega}\le \lkp$, $\lkp\le L_{\kappa\kappa}$
and $\craig(\lkp)$. Moreover, $\lkp$ has a Lindstr\"om style model
theoretic characterization in terms of a strong form of
undefinability of well-order. 

The main ingredient of $\lkp$ is a
new variant  $G^\beta_\theta(A,B)$, where $\beta$ is an ordinal and $\theta$ a cardinal, of the Ehrenfeucht-\fraisse\ game for $L_{\kappa\kappa}$. In
this variant player $\II$ gives only partial answers to moves
of player $\I$. She makes promises and fulfills the promises move by move. 
If the game lasted for $\omega$ moves, the partial
answers of $\II$ would constitute full answers. But the game has a
well-founded clock, so $\II$ never ends up fully completing her
answers. Although $\lkp$ cannot express well-ordering, it can
express e.g. the property of a linear order of not having an uncountable
descending chain.

Shelah's game $G^\beta_\theta(A,B)$ proceeds as follows: 
\begin{itemize}
\item At first player I picks $\beta_0<\beta$ and  $\vec{a^0}$  from $A$ with $len(\vec{a^0})\le\theta$. The move $\beta_0$ is a ``clock'' move which controls the (finite) length of the game. The move $\vec{a^0}$ is player I's challenge to player II. He wants to know what are the images of the $\le \theta$ elements of the sequence $\vec{a^0}$. 
\item Next II picks $f_0:\vec{a^0}\to\omega$ and  $g_0:A\to B$ a partial isomorphism such that  $f_{0}^{-1}(0)\subseteq\dom(g_{0})$. Here player II responds to the challenge made by player I. But now comes the catch. Player II divides the set $\vec{a^0}$ into $\omega$ pieces  with $f_0$. She is not going to respond yet to the entire challenge $\vec{a^0}$, only to a piece, namely  $f_{0}^{-1}(0)$. To the other pieces she is going to respond later, one by one. With good luck the game ends before she has to respond to $f_{0}^{-1}(10^{10})$! 
\item Next I picks 
$\beta_1<\beta_0$ and $\vec{b^1}$ from $B$ such that $len(\vec{b^1})\le\theta$. Here I challenges II to give the preimages of the $\le\theta$ elements of $\vec{b^1}$.
\item Next II picks 
$f_1:\vec{b^1}\to\omega$ and $g_1:A\to B$ a partial isomorphism, $g_1\supseteq g_0$ such that $f_{0}^{-1}(1)\subseteq\dom(g_{1})$ and $f_{1}^{-1}(0)\subseteq\ran(g_{1})$.  With $f_1$ player II splits the challenge $\vec{b^1}$ into $\omega$ pieces to which she is going to respond one by one while the game continues. With $g_1$ she starts responding to the challenge 
$\vec{b^1}$. In order for her responses to remain consistent, it is necessary that $g_1\supseteq g_0$. By making sure that  $f_{1}^{-1}(0)\subseteq\ran(g_{1})$ she responds to the challenge $f_{1}^{-1}(0)$. Now she has to also continue responding to the challenge $\vec{a^0}$ by making sure $f_{0}^{-1}(1)\subseteq\dom(g_{1})$.
\item And so on until $\beta_n=0$.
\end{itemize}
Player II {wins} if she can play all her moves, otherwise Player I wins.
 $A\sim^\beta_\theta B$ if Player II has a winning strategy in the game.
 $A\equiv^\beta_\theta B$ is defined as the transitive closure of $A\sim^\beta_\theta B$.
 A union of $\le \beth_{\beta+1}(\theta)$ equivalence classes of $\equiv^\beta_\theta$ for some $\theta<\kappa$ and $\beta<\theta^+$ is called a {sentence} of $L^1_\kappa$.
 
How does this definition make $L^1_\kappa$ an abstract logic? One has to work a bit to prove that all the closure properties that we require of a logic are satisfied. In the end, $L^1_\kappa$ is an abstract logic in the sense of Definition 1.1.1 of \cite{MR819531}. But what is the syntax like? All we know about the sentences of $L^1_\kappa$ is that they are classes of models. Still we can prove:

\begin{theorem}[Shelah \cite{MR2869022}]
Suppose $\kappa=\beth_\kappa$. Then $L^1_\kappa$ satisfies the Craig Interpolation Theorem.
\end{theorem}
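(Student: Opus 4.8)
The plan is to prove Craig interpolation for $L^1_\kappa$ by reducing it to a \emph{separation} statement for projective classes and then exploiting the fact that the sentences of $L^1_\kappa$ are exactly the (suitably small) unions of $\equiv^\beta_\theta$-classes. Suppose $\phi\models\psi$, and set $\tau_1=\tau(\phi)$, $\tau_2=\tau(\psi)$, $\tau=\tau_1\cap\tau_2$; recall $|\tau_1\cup\tau_2|<\kappa$. Using renaming we may assume $(\tau_1\setminus\tau)\cap(\tau_2\setminus\tau)=\emptyset$, and since a union of $\equiv^\beta_\theta$-classes has a complement which is again such a union, $L^1_\kappa$ is closed under negation, so $\neg\psi\in L^1_\kappa$. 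Put $K_0=\{A\rest\tau:A\models\phi\}$ and $K_1=\{B\rest\tau:B\models\neg\psi\}$, both with vocabulary $\tau$. They are disjoint: a common reduct $C$ would expand to some $A\models\phi$ and some $B\models\neg\psi$, and since the non-common symbols are disjoint we may amalgamate $A$ and $B$ over $C$ to a single $(\tau_1\cup\tau_2)$-structure $D$ with $D\rest\tau_1=A$ and $D\rest\tau_2=B$; then $D\models\phi$, so $D\models\psi$, so $B=D\rest\tau_2\models\psi$, a contradiction. So it is enough to show that $K_0$ is an $\ec{L^1_\kappa}$-class with vocabulary $\tau$: a sentence $\theta\in L^1_\kappa$ defining it will then be the interpolant.

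Next, fix $\theta<\kappa$ with $|\tau_1\cup\tau_2|\le\theta$ and $\beta<\theta^+$ large enough that, by monotonicity of $\equiv^\beta_\theta$ in both $\beta$ and $\theta$, $\phi$ is a union of $\equiv^\beta_\theta$-classes of $\tau_1$-structures and $\psi$ (hence $\neg\psi$) is a union of $\equiv^\beta_\theta$-classes of $\tau_2$-structures. The crux is a \textbf{reduct lemma}: there are $\theta'<\kappa$ and $\beta'<(\theta')^+$, depending only on $\beta,\theta$ and $|\tau_1|$, such that $K_0$ is a union of $\equiv^{\beta'}_{\theta'}$-classes of $\tau$-structures; equivalently, if $C=A\rest\tau$ for some $A\models\phi$ and $C\equiv^{\beta'}_{\theta'}C'$, then $C'=A'\rest\tau$ for some $A'\models\phi$. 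One proves this by a transfer argument: starting from a winning strategy of $\II$ in Shelah's partial-answer game $G^{\beta'}_{\theta'}(C,C')$ one builds, by a back-and-forth construction, an expansion $A'$ of $C'$ to $\tau_1$ together with a winning strategy for $\II$ in $G^{\beta}_{\theta}(A,A')$; the interpretations on $C'$ of the symbols of $\tau_1\setminus\tau$ are read off from the partial isomorphisms $\II$'s strategy maintains. Here $\theta'$ must be taken large enough to carry the challenge sequences together with the arities of the new symbols, and $\beta'$ large enough to absorb the extra rounds. \emph{This is the step I expect to be the main obstacle}: because $\II$ only ever gives partial answers — so the matching between $C$ and $C'$ is revealed piecemeal and the game stops on a well-founded clock before it is complete — one must organise the transfer carefully (and handle the passage between $\sim^\beta_\theta$ and its transitive closure $\equiv^\beta_\theta$), and one must check that the price paid stays below $\kappa$, i.e. that $\theta'<\kappa$ and $\beta'<(\theta')^+$ rather than needing a larger logic on the right.

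Finally, granting the reduct lemma, $K_0$ is a union of $\equiv^{\beta'}_{\theta'}$-classes of $\tau$-structures. Since $\kappa=\beth_\kappa$ (a strong limit fixed point of $\beth$) and $\theta'<\kappa$, $\beta'<(\theta')^+<\kappa$, the total number of $\equiv^{\beta'}_{\theta'}$-classes is at most $\beth_{\beta'+1}(\theta')<\kappa$; hence $K_0$ is a union of at most $\beth_{\beta'+1}(\theta')$ such classes and therefore, by the very definition of the sentences of $L^1_\kappa$, there is $\theta\in L^1_\kappa$ with $\tau(\theta)=\tau\subseteq\tau(\phi)\cap\tau(\psi)$ whose $\tau$-models are exactly $K_0$. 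Then $\phi\models\theta$, since every model of $\phi$ reduces into $K_0$. And $\theta\models\psi$: if some $\tau_2$-structure $B$ had $B\rest\tau\in K_0$ but $B\models\neg\psi$, then $B\rest\tau$ expands to some $A'\models\phi$, and amalgamating $A'$ with $B$ over $\tau$ (as in the first paragraph) would force $B\models\psi$, a contradiction. Thus $\theta$ is an interpolant, and $L^1_\kappa$ has the Craig Interpolation property.
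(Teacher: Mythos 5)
The paper does not actually give a proof of this theorem: it only cites Shelah and remarks that the argument rests on properties of the game $G^\beta_\theta(A,B)$ and is ``not totally unlike'' the proof of the Lindstr\"om theorem. Judging your argument on its own terms, it has a fatal gap: the ``reduct lemma'' that carries all the weight is false. You reduce interpolation to the claim that $K_0=\{A\rest\tau:A\models\phi\}$ is itself an $\ec{L^1_\kappa}$-class, i.e.\ that every non-sort-adding $\pc{L^1_\kappa}$-class is already elementary in $L^1_\kappa$. That is far stronger than interpolation and contradicts facts recorded in the paper. Concretely, let $\phi\in\looo\le L_{\kappa\omega}\le L^1_\kappa$ in the vocabulary $\{<\}\cup\{c_n:n<\omega\}$ say ``$<$ is a linear order and $c_{n+1}<c_n$ for all $n$'', and let $\tau=\{<\}$. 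Then $K_0$ is exactly the class of non-well-founded linear orders. If $K_0$ were a union of $\equiv^{\beta'}_{\theta'}$-classes it would be $\ec{L^1_\kappa}$, and since $L^1_\kappa$ is closed under negation and conjunction, well-ordering would be $L^1_\kappa$-definable --- contradicting the undefinability of well-order in $L^1_\kappa$, which the paper stresses is the very content of its Lindstr\"om-style characterization. The transfer argument you sketch also cannot deliver the lemma: $G^{\beta'}_{\theta'}(C,C')$ has a well-founded clock and player $\II$ only ever exhibits partial isomorphisms covering a fragment of the structures, so there is no stage at which the interpretations of the symbols of $\tau_1\setminus\tau$ can be ``read off'' on all of $C'$ (in the Lindstr\"om proof one gets a genuine isomorphism only because countability plus a non-well-founded clock lets the back-and-forth run to completion).

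The fix is to aim for something weaker, which is the actual shape of the game-theoretic argument: show that for suitable $\theta<\kappa$ and $\beta<\theta^+$, whenever $A\models\phi$ and $B\models\neg\psi$ one has $A\rest\tau\not\equiv^\beta_\theta B\rest\tau$ (otherwise $\II$'s strategy on the reducts, combined with amalgamation over the disjoint extra vocabularies, would produce a model of $\phi\wedge\neg\psi$). Granting that, the interpolant is the union of all $\equiv^\beta_\theta$-classes of $\tau$-structures that meet $K_0$: it contains $K_0$, is disjoint from $K_1$, and is a sentence of $L^1_\kappa$ by exactly the counting in your last paragraph. The point is that this separating class properly contains $K_0$ in general; your insistence that the interpolant define $K_0$ exactly is where the proof breaks. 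Your first paragraph (disjointness of $K_0$ and $K_1$ via renaming and amalgamation) and your counting argument are fine and would survive in the corrected proof.
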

 
The proof is based entirely on properties of the game   $G^\beta_\theta(A,B)$ and is not totally unlike the proof of Theorem~\ref{lindstr}. It is somewhat surprising that many model theoretic results can be proved for $L^1_\kappa$ although we are not able (yet) to answer the question:
 
 \medskip
 
\noindent {\bf Problem:}
  What is the syntax of Shelah's logic? 
\medskip
 
  Cartagena logic \cite{kivimäki2024cartagenalogic} is a syntactic fragment of Shelah's logic. Its $\Delta$-extension is Shelah's logic $L^1_\kappa$. For more on $L^1_\kappa$, see  \cite{boban}.
  

\section{Higher order logics}\label{Higher order logics}


Higher order logic are the oldest extensions of first order logic. 
In \emph{weak second order logic} $L^2_{\mbox{\tiny w}}$ there are variables for individuals, as in first order logic, but also variables for finite sets of individuals. If $X$ is a set-variable and $t$ is a term denoting an individual, then we  can form the atomic formula $X(t)$. Quantifiers range over individual and over set variables. The logic $L^2_{\mbox{\tiny w}}$ is quite close to the logic $L(Q_0)$ (see section~\ref{Generalized quantifiers}), in fact, $\Delta(L^2_{\mbox{\tiny w}})\equiv\Delta(L(Q_0))$.

It was shown in \cite{MR250861} that the Beth property fails for \emph{weak} second order logic, and for  \emph{monadic} second order logic in which second order variables range over arbitrary subsets of the domain but vocabularies are canbe non-monadic. The former claim is proved by means of an undefinability of truth argument, almost verbatim as we did in the  proof of Theorem~\ref{undefty}, while the latter claim is proved by reduction to  the decidable monadic second order theory of the successor function (\cite{MR183636}). Namely, in the standard model of the successor function, which is definable in monadic second order logic, addition and multiplication are implicitly definable by their familiar recursive definitions. If they were explicitly definable, arithmetic would be reducible to the monadic second order theory of the successor function. However, this is impossible because the former is non-arithmetic but the latter is decidable by \cite{MR183636}.


In {stationary logic} $\laa$, introduced in \cite{MR376334} and \cite{MR486629}, we have variables for individuals and also for countable sets of individuals.  If $s$ is a set-variable and $t$ is a term denoting an individual, then we  can again form the atomic formula $s(t)$. This time we do not have existential and universal quantifiers for set-variables. Instead we have a quantifier which can say that ``most'' countable sets have some property. What does ``most'' mean when we talk about countable sets? Here we use the concept of a club set. A set  $C$ of countable subsets of $M$ is \emph{unbounded} if for every countable $X\subseteq M$ there is $Y\in C$ such that $X\subseteq Y$. The set $C$ is \emph{closed} if it is closed under unions of increasing $\omega$-chains. A \emph{club} set means a closed unbounded set. The club (countable) subsets of a set $M$ form a (normal) filter\footnote{For the definitions of filter and ultrafilter, see \cite{MR1940513}.}, which however is not an ultrafilter. Still, it is a useful measure of largeness. We adopt the following generalized second order quantifier:

$$\begin{array}{lcl}
\mm\models aa s\phi(s,\vec{a})&\iff&\{A\subseteq M\ :\ |A|\le\omega, (\mm,A)\models\phi(A,\vec{a})\}\\
&&\mbox{ contains a club of countable subsets of $M$}. 
\end{array}$$

As proved in \cite{MR376334} and \cite{MR486629}, the logic $\laa$ is countably compact and has a nice complete axiomatization. Of course, it does not have the L\"owenheim property, but every consistent sentence has a model of cardinality at most $\aleph_1$. It extends both $L(Q_1)$ and $\lcof$.
The logic $\laa$ does not have Beth property \cite{MR625528}, but:

\begin{theorem}[\cite{MR808815}]\label{cof-aa}
$\craig(\lcof,\laa)$.
\end{theorem}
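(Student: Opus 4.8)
The plan is to deduce the statement from the Robinson property $\rob(\lcof,\laa)$. The logics $\lcof$ and $\laa$ are both closed under negation and conjunction, $\lcof\le\laa$ (stationary logic extends the cofinality quantifier), and $\laa$ is countably compact with the finite occurrence property, so for each countable vocabulary there are only countably many $\laa$-sentences. Hence the countably compact version of the theorem equating $\craig(L,L')$ with $\rob(L,L')$ (the variant noted in the remark right after that theorem's proof) reduces the problem to proving $\rob(\lcof,\laa)$ for countable vocabularies; and since every $\lcof$-sentence has a finite vocabulary, this already yields $\craig(\lcof,\laa)$ in full.

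So suppose $\Sigma_0$ is a complete set of $\laa$-sentences, $\Sigma_1,\Sigma_2$ are sets of $\lcof$-sentences, both $\Sigma_0\cup\Sigma_1$ and $\Sigma_0\cup\Sigma_2$ are consistent, and $\tau(\Sigma_0\cup\Sigma_1)\cap\tau(\Sigma_0\cup\Sigma_2)=\tau(\Sigma_0)=:\tau_0$, all vocabularies countable. Write $\tau_i=\tau(\Sigma_0\cup\Sigma_i)$ and pick $\mm_i\models\Sigma_0\cup\Sigma_i$. Since $\Sigma_0$ is $\laa$-complete, $\mm_1\rest\tau_0\equiv_{\laa}\mm_2\rest\tau_0$. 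It suffices to build a $(\tau_1\cup\tau_2)$-structure $\mn$ with $\mn\rest\tau_1\equiv_{\lcof}\mm_1$ and $\mn\rest\tau_2\equiv_{\lcof}\mm_2$: then $\mn\rest\tau_i\models\Sigma_i$, while $\mn\rest\tau_0\equiv_{\lcof}\mm_1\rest\tau_0$ gives $\mn\models\Sigma_0\cap F_{\lcof}$, so $\mn\models(\Sigma_0\cap F_{\lcof})\cup\Sigma_1\cup\Sigma_2$ and the latter is consistent.

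To produce $\mn$ we amalgamate $\mm_1$ with a renamed copy $\mm_2'$ of $\mm_2$ over their common $\tau_0$-reduct, imitating the classical proof of the Robinson property from saturated models but using $\laa$-saturated models of power $\aleph_1$ in place of ordinary saturated ones. The two ingredients are: (a) every complete $\laa$-theory in a countable vocabulary has an $\laa$-saturated model of cardinality $\aleph_1$ — here one exploits countable compactness of $\laa$, the fact that consistent $\laa$-sentences have models of power $\le\aleph_1$, and an elementary-chain construction of length $\omega_1$ in which the ``$aa$'' requirements are discharged by passing to countable submodels lying in the relevant clubs; and (b) two $\laa$-saturated models of power $\aleph_1$ that are $\laa$-equivalent are isomorphic — here the back-and-forth must answer not only individual-element challenges but also challenges of the form ``match this countable set'', which is where normality of the filter of clubs of countable subsets is used to keep the partial approximations coherent. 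Granting (a) and (b), we replace $\mm_1,\mm_2$ by $\laa$-saturated models of power $\aleph_1$ of their (complete) $\laa$-theories; their $\tau_0$-reducts remain $\laa$-equivalent, hence by (b) isomorphic, and the isomorphism transports the interpretations of the symbols of $\tau_2'\setminus\tau_0'$ onto $\mm_1$, giving the required $\mn$ and completing $\rob(\lcof,\laa)$.

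The main obstacle is precisely (a) and (b). The L\"owenheim–Skolem route of Theorem~\ref{lindstr} is not available, since $\laa$ has no L\"owenheim property; moreover, in contrast with $\loo$, a win for player $\II$ in the $\omega$-round \ef\ game adapted to $\laa$ does not by itself yield an isomorphism, not even between countable models. One therefore has to construct the amalgamating isomorphism of the $\tau_0$-reducts at cardinality $\aleph_1$, controlling the ``countable set'' moves by careful bookkeeping over clubs of countable submodels on each side; this is the heart of the argument, while the reduction in the first paragraph and the final transfer of interpretations are routine.
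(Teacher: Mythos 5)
Your opening reduction to $\rob(\lcof,\laa)$ is exactly the paper's first move. But the amalgamation you then propose cannot be completed, and the obstruction is not merely that (a) and (b) are left unproved: they cannot both be true. If every complete $\laa$-theory in a countable vocabulary had an $\laa$-saturated model of power $\aleph_1$, and any two $\laa$-equivalent such models were isomorphic, then your argument run verbatim with $\Sigma_1,\Sigma_2$ arbitrary $\laa$-theories (nothing in your amalgamation uses that they are $\lcof$-theories) would give the full Robinson property $\rob(\laa)$ for countable vocabularies: the $\tau_0$-reducts of the saturated models are still saturated and, since $\Sigma_0$ is complete, $\laa$-equivalent, hence isomorphic, and transporting interpretations yields a model of $\Sigma_0\cup\Sigma_1\cup\Sigma_2$. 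Since $\laa$ is countably compact with only countably many sentences per countable vocabulary, this would give $\craig(\laa)$ and hence, by Proposition~\ref{cib}, $\Beth(\laa)$ --- contradicting the failure of the Beth property for $\laa$ \cite{MR625528}, recorded immediately before the statement of this theorem. So at least one of your two key lemmas is false, and they are precisely the steps you defer as ``the heart of the argument''. The content of the theorem lies in the asymmetry between the two logics: one can only hope to control the $\lcof$-fragment of the amalgam, not its full $\laa$-theory, and any strategy that produces an expansion of $\mm_1$ literally isomorphic to $\mm_2$ over $\tau_0$ proves too much.

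The paper's proof is built around this asymmetry. It adjoins unary predicates $P_\alpha$, $\alpha<\omega_1$, witnessing the $aa$-quantifiers of the two complete $\laa$-theories, Morleyizes the enriched theories to complete first order theories $T^*_1,T^*_2$ via predicates $R_\phi$, and applies the Robinson property of \emph{first order} logic to obtain an $\aleph_2$-saturated model $\mm$ of $T^*_1\cup T^*_2$. The candidate model $\mn$ is the union of the submodels of $\mm$ carved out by the $P_\alpha$; in $\mn$ the definable linear orders that ought to have uncountable cofinality have cofinality exactly $\aleph_1$, while those that ought to have cofinality $\omega$ come out with cofinality $\aleph_2$, and a final chain argument of length $\omega$ converts the latter to $\aleph_0$ without disturbing the former. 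It is this bookkeeping of cofinalities --- which makes sense only because the conclusion concerns $\lcof$-sentences --- that replaces the back-and-forth isomorphism you are trying to construct, and it has no counterpart in your sketch.
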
 

\begin{proof}(A rough sketch) We actually prove $\rob(\lcof,\laa)$. 
%
This involves proving that if $T_1$ and  $T_2$ are countable complete $\laa$-theories, $T_0=T_1\cap T_2$ is complete, and $\tau(T_1)\cap \tau(T_2)=\tau(T_0)$, then 
$(T_1\cap\lcof)\cup(T_2\cap\lcof)$ is consistent. Theories $T_1$ and $T_2$ are first enriched by new unary predicates $P_\alpha$, $\alpha<\omega_1$. We add the axiom $\phi(P_{\alpha_1},\ldots,P_{\alpha_n})$ to $T'_l$, whenever $\alpha_1<\ldots<\alpha_n<\omega_1$ and $aa s_1\ldots aa s_n\phi(s_1,\ldots,s_n)\in T_l$. These predicates build a canonical club in our final model giving us control of $\laa$-truth. 
To see how this works in a simple  case, consider the set of 
\begin{equation}\label{aap}
 \forall x_1\ldots\forall x_n((P(x_1)\wedge\ldots\wedge P(x_{n+1}))\to\phi^{(P)}(x_1,\ldots,x_n)),
\end{equation}
 where 
\begin{equation}\label{aa}
aa s \forall x_1\ldots\forall x_n((s(x_1)\wedge\ldots\wedge s(x_{n+1}))\to\phi^{(s)}(x_1,\ldots,x_n))\in T_1
\end{equation} and $\phi(x_1,\ldots,x_n)$ is first order. Since every model has a closed unbounded set of countable subsets that are domains of elementary submodels with respect to first order logic,\footnote{One first forms Skolem-functions and then takes submodels that are closed under the Skolem-functions. The collections of such submodels is always closed unbounded.} and the club sets form a filter, the collection of all sentences (\ref{aap}) essentially ``says'' in a model $\mm$ that $P^\mm$ is the domain of a countable elementary submodel of $\mm$.

Next $T'_1$ and $T'_2$ are reduced to complete first order theories $T^*_1$, $T^*_2$ by introducing new relation symbols, namely an $n$-ary symbol $R_\phi$ for each formula $\phi(x_1,\ldots, x_n)$ of $\laa$, 
with the defining axiom $$\forall x_1\ldots x_n(R_{\phi}(x_1,\ldots,x_n)\leftrightarrow\phi(x_1,\ldots,x_n)),$$ and taking only the resulting first order formulas. 

Then we use the Robinson property of first order logic to obtain a model $\mm$ of $T^*_1\cup T^*_2$. W.l.o.g. $\mm$ is $\aleph_2$-saturated (see e.g. \cite[p. 480]{MR1221741} for the definition and basic properties of $\aleph_2$-saturation). Let $\mn$ be the union of the submodels of $\mm$ determined by the predicates $P_\alpha$. Now we can show that if $\mn$ thinks the cofinality of a definable linear order is uncountable, it is in $\mn$ exactly $\aleph_1$. On the other hand, if $\mn$ thinks the cofinality of a definable linear order is $\omega$, the real cofinality of the linear order  in $\mn$ is actually $\aleph_2$. The final step is a chain argument, using  a chain of length $\omega$, to turn cofinality $\aleph_2$ to cofinality $\aleph_0$ without losing the cofinalities that are $\aleph_1$. 
 \qed

Why the above result is remarkable is that it is the closest we have got so far in obtaining interpolation among countably compact proper extensions of $\loo$. The situation raises the following intriguing question:

 \end{proof}

\medskip
 
\noindent {\bf Problem:}
 Is there a logic $L$ such that $\lcof\le L\le \laa$ and $L$ has the Craig interpolation property?
\medskip

Note that if a logic $L$ as in the above problem exists, then $L$ is also countably compact.

In full single-sorted second order logic $L^2$, with variables for arbitrary relations,  interpolation (even uniform\footnote{See \cite{chapter:predicate}}) {holds} trivially, because we can quantify over relations. The separation property, in particular, becomes trivially true as single-sorted $\pc{\sol}$-classes are always $\ec{\sol}$-classes. In particular, $\Delta^1_1(\sol)\equiv\sol$.
  In many-sorted second order logic interpolation  (even the weak Beth property) {fails}  because we can use new sorts to define truth implicitly, but not 
  explicitly (\cite{MR195685}, \cite{MR421982}), imitating the proof of Theorem~\ref{undefty}. The difference between the single-sorted and the many-sorted case is that in the latter the predicates that one would like to quantify away may involve new sorts, i.e. elements outside the current model. There is no way $\sol$ can reach out to them. In particular, $\Delta(\sol)\not\equiv\sol$.

\medskip
 
\noindent {\bf Problem:}
 Is there any (set-theoretically definable) extension of second order logic that has many-sorted interpolation?
\medskip
 
If we drop the requirement of set-theoretical definability, there is a  solution: sort logic (\cite{MR3205075}). If we just want 
a proper extension of first order logic with many-sorted interpolation, the logic $\looo$ offers a solution, as we have seen.


Existential second order logic $\eso$ (i.e. single-sorted $\pc{\loo}$) is a perfectly nice abstract logic, only it is not closed under negation. It satisfies 
(even uniform) single-sorted Craig Interpolation Theorem, as we can quantify over relations, i.e. in the single-sorted context $\pc{\eso}\equiv\eso$. For the same reason $\eso$ satisfies the Souslin-Kleene Interpolation 
and Beth Theorems. In each case the claim is trivially true. The usual proof of the Robinson property works also for $\eso$. 
The logic $\eso$ satisfies also compactness and has the L\"owenheim property. This does not violate Lindstr\"om's Theorem, because $\eso$ is not closed under negation. In \cite{MR4535931} it is proved that there is no strongest abstract logic without negation which is compact and satisfies the L\"owenheim property. Many-sorted $\pc{\loo}$ is actually equivalent to  single-sorted $\pc{\loo}$ (\cite{zbMATH03216175}). Thus also  many-sorted $\eso$ satisfies interpolation and the related properties following from interpolation.

Dependence logic $\dep$, based on  the  atom $=\!\!\!(\vec{x},\vec{y})$, which says that $\vec{x}$ totally determines $\vec{y}$, was introduced in  
\cite{MR2351449}. On the level of sentences $\dep$ is equivalent to $\eso$, hence it satisfies (even uniform) Craig Interpolation, Souslin-Kleene Interpolation, Beth theorem, and the Robinson property, whether single- or many-sorted.
 %
%
The Craig Interpolation property holds for $\dep$ formulas also in the following form: Suppose $\phi(\vec{x},\vec{y})\models\psi(\vec{x},\vec{z})$, where $\vec{y}\cap\vec{z}=\emptyset$. Then there is $\theta(\vec{x})$ such that 
$\phi(\vec{x},\vec{y})\models\theta(\vec{x})$ and $\theta(\vec{x})\models\psi(\vec{x},\vec{z})$. Namely, we can take $\theta(\vec{x})$ to be $\exists \vec{y}\phi(\vec{x},\vec{y})$. This works, because of locality\footnote{Locality of a formula means that the truth of the formula depends  only on values of assignments on variables which are free in the formula.} (\cite[Lemma 3.27]{MR2351449}). 

Inclusion Logic  $\inc$  (\cite{MR2846029}), based on  atoms of the form  $\vec{x}\subseteq\vec{y}$, which say that every value of $\vec{x}$ occurs as a value of $\vec{y}$, is not equal to $\eso$ on the level of sentences. Rather, inclusion logic is equivalent to Positive Greatest Fixed Point Logic $\mbox{GFP}^+$, i.e. the fragment  of Greatest Fixed Point Logic in which fixed point operators occur only positively (\cite{MR3111746}). Thus we cannot use reduction to $\eso$ to infer any interpolation properties for inclusion logic.

\medskip
 
\noindent {\bf Problem:}
 Does Inclusion Logic  $\inc$  (\cite{MR2846029}) satisfy  the Craig Interpolation property?
\medskip
 

\section{Conclusion}

  Interpolation in all its forms mostly fails in extensions of first order logic. It seems difficult to extend first order logic in a way which leads to the kind of balance required by interpolation and the Beth property. There is the artificial way of using the $\Delta$-operation to obtain $\Delta(L)$, which always has the Souslin-Kleene Interpolation property and therefore the weak Beth property. But there is no general method to find a syntax for the semantically defined $\Delta(L)$. The infinitary logic $\looo$ is remarkable in the richness of its model theory and, as we have seen, it has the interpolation property. In bigger infinitary logics interpolation systematically fails, signalling, perhaps, that there is something incomplete in their syntax. The new large infinitary logic $L^1_{\kappa}$ enjoys interpolation, but lacks so far a satisfactory syntax. Overall, the area abounds with open problems, only some of which have been mentioned above.

\bibliographystyle{plain}
\bibliography{Vaananen_for_Craig,taci}
\end{document}